\newfont{\Blackboard}{msbm10 scaled 1200}
\newfont{\roma}{cmr10 scaled 1200}
\def \Z {{\mathbb{Z}}}
\def \R {{\mathbb{R}}}
\def \N {{\mathbb{N}}}
\def \C {{\mathbb{C}}}
\def \eps {{\epsilon}}
\def \lTU {{L^2(0,T;U)}}
\newcommand{\dint}{\displaystyle\int}
\newcommand {\nc}   {\newcommand}
\nc {\be}   {\begin{equation}} \nc {\ee}   {\end{equation}}
\nc {\beq}  {\begin{eqnarray}} \nc {\eeq}  {\end{eqnarray}}
\nc {\beqs} {\begin{eqnarray*}} \nc {\eeqs} {\end{eqnarray*}}
\newcommand{\bthe}{\begin{theorem}}
\newcommand{\ethe}{\end{theorem}}
\newcommand{\brk}{\begin{remark}}
\newcommand{\erk}{\end{remark}}
\newcommand{\bco}{\begin{corollary}}
\newcommand{\eco}{\end{corollary}}
\newcommand{\blem}{\begin{lemma}}
\newcommand{\elem}{\end{lemma}}
\newcommand{\bprop}{\begin{proposition}}
\newcommand{\eprop}{\end{proposition}}
\newcommand{\bdef}{\begin{definition}}
\newcommand{\bex}{\begin{example}}
\newcommand{\eex}{\end{example}}
\newcommand{\caD}{{\cal D}}
\newcommand{\caA}{{\cal A}}
\newcommand{\caL}{{\cal L}}
\newcommand{\caH}{{\cal H}}
\newcommand{\lb}{\lambda}
\newcommand{\da}{{\cal D}(A)}
\newcommand{\dac}{{\cal D}(\caA_c)}
\newcommand{\dad}{{\cal D}(\caA_d)}
\newcommand{\al}{\alpha}
\newcommand{\dsum}{\displaystyle \sum}
\def\edc{\end{document}}
\newtheorem{theorem}{Theorem}[section]
\newtheorem{lemma}[theorem]{Lemma}
\newtheorem{corollary}[theorem]{Corollary}
\newtheorem{remark}[theorem]{Remark}
\newtheorem{definition}[theorem]{Definition}
\newtheorem{proposition}[theorem]{Proposition}
\newtheorem{example}[theorem]{Example}
\def\dfrac{\displaystyle \frac }
\newcommand{\inta}{\dint_\Omega}
\newcommand{\intbn}{\dint_{\Gamma_1}}
\newcommand{\intta}{\int_0^T\dint_\Omega}
\newcommand{\inttb}{\int_0^T\dint_\Gamma}
\newcommand{\inttbn}{\int_0^T\dint_{\Gamma_1}}
\newcommand{\intabt}[1]{\left[\inta{#1}\right]_0^T}
\newcommand{\p}[1]{\partial_{#1}}
\newcommand{\g}{\nabla}
\newcommand{\w}{\widehat{w}_2}
\newcommand{\uu}{\widehat{u}_2}
\newcommand{\ww}{\widehat{w}_3}
\newcommand{\www}{\widehat{w}_4}
\newcommand{\pk}{\sqrt{2}\sin (k\pi x)}
\newcommand{\pkxi}{\sqrt{2}\sin (k\pi \xi)}
\newcommand{\pkxii}{|\sin (k\pi \xi)|^2}
\def \Z {{\mathbb{Z}}}
\def \R {{\mathbb{R}}}
\def \N {{\mathbb{N}}}
\def \C {{\mathbb{C}}}
\def \Q {{\mathbb{Q}}}
\begin{document}
\thispagestyle{empty}
\title{\bf Remark on stabilization of second order evolution equations by 
unbounded dynamic feedbacks and applications}
\author{Zainab Abbas $^{\dag}$, Ka\"{\i}s Ammari  
\thanks{UR Analyse et Contr\^ole des Edp, UR 13ES64, D\'epartement de Math\'ematiques, Facult\'e des Sciences de Monastir, Universit\'e de Monastir, 5019 Monastir, Tunisie,
e-mail: kais.ammari@fsm.rnu.tn,} \, and \,  Denis Mercier \thanks{Laboratoire de Math\'ematiques
et ses Applications de Valenciennes, FR CNRS 2956, Institut des Sciences et Techniques de
Valenciennes, Universit\'e de Valenciennes et du Hainaut-Cambr\'esis, Le Mont
Houy, 59313 VALENCIENNES Cedex 9, FRANCE, e-mail: (Zainab Abbas) zainab.abbas@univ-valenciennes.fr, (Denis Mercier) denis.mercier@univ-valenciennes.fr}}

\date{}
\maketitle

\begin{abstract}
In this paper we consider second order evolution equations with unbounded dynamic feedbacks. Under a regularity assumption we show that observability properties for the undamped problem imply decay estimates for the damped problem. We consider both uniform and non uniform decay properties.
\end{abstract}

\noindent
{\bf 2010 Mathematics Subject Classification.}
35L05, 93D15, 37K45, 93B07.

\noindent{\bf Key words and phrases.} Unbounded Dynamic feedback, observability, uniform stability, non uniform stability.

\section{Introduction} \label{intro}
Let $X$ be a complex Hilbert space with norm and inner product denoted respectively by $\|.\|_X$ and $<.,.>_{X,X}$.
Let $A$ be a linear unbounded positive self-adjoint operator which is the Friedrichs extension of the
 triple $(X,V,a)$, where $a$ is a closed quadratic form with domain $V$ dense in $X.$ Note that by definition $\da$ (the domain of $A$) is dense in $X$ and  $\da$ equipped with  the graph norm is a Hilbert space and the embedding   $\da \subset  X $ is continuous.
Further, let $U$ be a complex Hilbert space (which will be identified with its dual space) with norm and inner
product respectively denoted by $\|.\|_U$ and  $<.,.>_{U,U}$ and let $B \in \caL (U,V^\prime),$ where $V^\prime$ is the dual space of $V$ obtained by means of the inner product in $X$.
Consider the system
\be 
\label{Gs0} 
\left \{ \begin{array}{lll}
x^{\prime \prime} (t)+ A x (t)+ B u(t)=0, &&t\in [0, +\infty )\\
\rho u^\prime (t)-\widehat{C} u(t)- B^* x^\prime (t)=0,&& t\in [0, +\infty ) \\
x(0)=x_0,x^\prime(0)=y_0, u(0)=u_0,  &&
\end{array}
\right.
\ee
with $\rho$ a scalar parameter. By replacing $\rho$ by $0$ and $-\widehat{C}$  by the identity
in system (\ref{Gs0}) we obtain
the system whose stability was studied in \cite{ammari:01}.

In this paper we are interested in studying the stability of linear control problems coming from elasticity which can be written as

\be \label{Gs}
\left \{ \begin{array}{lll}
x^{\prime \prime} (t)+ A x (t)+ B u(t)=0, &&t\in [0, +\infty )\\
u^\prime (t)-\widehat{C} u(t)- B^* x^\prime (t)=0,&& t\in [0, +\infty ) \\
x(0)=x_0,x^\prime (0)=y_0, u(0)=u_0,  &&
\end{array}
\right.
\ee
where $x: [0, +\infty ) \rightarrow X $ is the state of the system,
$u \in \lTU $ is the input function  and  $\widehat{C}$ is a $m-$dissipative operator on $U$. We denote the
differentiation with respect to time by $^\prime$.

The aim of this paper is to give sufficient conditions leading to the  uniform or non uniform stability of the solutions of the corresponding closed loop system.

The second equation of the considered system describes a dynamical control in some models. Some systems that can be covered by the
formulation  (\ref{Gs}) are for example the hybrid systems.

Let us finish this introduction with some notation used in the
remainder of the paper: the notation $A\lesssim B$ and $A\sim B$ means the
existence of positive constants $C_1$ and $C_2$, which are
independent of $A$ and $B$ such that $A\le
C_2 B$ and $C_1 B\le A\le C_2B$.

\section{Well-posedness results} \label{wpr}
In order to study the system (\ref{Gs}) we use a reduction order argument. First, we introduce the Hilbert space ${\caH}=V\times X \times U$ equipped with the scalar product
$$<z,\tilde{ z}>_{{\caH},{\caH}}=a(x,\tilde{x})+<y,\tilde{y}>_{X,X}+<u,\tilde{u}>_{U,U},\;\;\forall z,\tilde{z} \in {\caH},   z=(x,y,u),\tilde{z}=(\tilde{x},\tilde{y},\tilde{u}).$$
Then we consider the unbounded operator
$$\begin{array}{ll}
\caA_d:& \dad\longrightarrow \caH\\
    & z=(x,y,u)\longmapsto \caA_d z=(y,-Ax-Bu,B^*y+\widehat{C}u),
\end{array}
 $$
 where $$\dad=\lbrace (x,y,u) \in V\times V \times D(\widehat{C}), Ax+Bu \in X\rbrace.$$
So the system (\ref{Gs}) is formally equivalent to
\be \label{s1} z'(t)=\caA_d z(t), z(0)=z_0,
\ee
where $z_0=(x_0,y_0,u_0).$

\begin{proposition}\label{exist0}
The operator $\caA_d$ is an m-dissipative operator on $\caH$ and thus it generates a $C_0$-semigroup.
\end{proposition}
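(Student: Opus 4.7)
The plan is the standard Lumer--Phillips route: verify dissipativity on $\dad$, then establish the range condition $\mathrm{Range}(\lambda I - \caA_d) = \caH$ for some $\lambda>0$.

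\textbf{Dissipativity.} I would pick $z=(x,y,u)\in\dad$ and compute
$$
\langle \caA_d z, z\rangle_{\caH}
= a(y,x) + \langle -Ax-Bu,\, y\rangle_{X} + \langle B^*y+\widehat{C}u,\, u\rangle_{U}.
$$
Since $Ax+Bu\in X$ by the definition of $\dad$, the middle pairing is well defined and, by extending $A$ and $B$ to $V\to V'$ and $U\to V'$ respectively, coincides with $-\langle Ax+Bu,y\rangle_{V',V} = -a(x,y)-\langle u,B^*y\rangle_{U}$. Because $a$ is Hermitian and by the identity $\langle B^*y,u\rangle_U - \langle u,B^*y\rangle_U = 2i\,\mathrm{Im}\langle B^*y,u\rangle_U$, the $a$-terms and the $B,B^*$-terms cancel in the real part, leaving
$$
\mathrm{Re}\,\langle \caA_d z, z\rangle_{\caH} = \mathrm{Re}\,\langle \widehat{C}u,u\rangle_{U} \le 0
$$
by the $m$-dissipativity of $\widehat{C}$ on $U$.

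\textbf{Range condition.} Given $(f,g,h)\in\caH$ and $\lambda>0$, the equation $(\lambda I-\caA_d)(x,y,u)=(f,g,h)$ reduces, via $y=\lambda x-f$, to the coupled system in $(x,u)\in V\times D(\widehat{C})$:
$$
\lambda^2 x + Ax + Bu = g+\lambda f, \qquad (\lambda I-\widehat{C})u - \lambda B^*x = h - B^*f.
$$
Since $\widehat{C}$ is $m$-dissipative, $(\lambda I-\widehat{C})^{-1}\in\caL(U)$ exists, so I would express $u$ from the second equation and insert it into the first, obtaining a single equation for $x\in V$ which I would solve by Lax--Milgram applied to the sesquilinear form
$$
b(x,\phi) = \lambda^2\langle x,\phi\rangle_X + a(x,\phi) + \lambda\bigl\langle (\lambda I-\widehat{C})^{-1}B^*x,\, B^*\phi\bigr\rangle_U
$$
on $V\times V$. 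Continuity is immediate from $B^*\in\caL(V,U)$ and boundedness of $(\lambda I-\widehat{C})^{-1}$; coercivity uses that $a$ is closed and positive (so $a(x,x)+\|x\|_X^2\sim\|x\|_V^2$) and that, by $m$-dissipativity of $\widehat{C}$, the real part of $\langle(\lambda I-\widehat{C})^{-1}w,w\rangle_U$ is nonnegative, a fact I would check by writing $w=(\lambda I-\widehat{C})v$.

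\textbf{Checking membership in $\dad$ and conclusion.} Once $x\in V$ is obtained, $u:=(\lambda I-\widehat{C})^{-1}(h-B^*f+\lambda B^*x)\in D(\widehat{C})$ and $y=\lambda x-f\in V$; reading $Ax+Bu=g+\lambda f-\lambda^2 x\in X$ from the first equation shows $(x,y,u)\in\dad$. Combined with the dissipativity, Lumer--Phillips delivers that $\caA_d$ generates a $C_0$-semigroup of contractions on $\caH$. The main technical point I expect is keeping the various dualities $V\hookrightarrow X\hookrightarrow V'$ and the identification of $B^*$ consistent when rewriting $\langle Ax+Bu,y\rangle$ through the extension of $A$ to $V'$; the coercivity sign of the $(\lambda I-\widehat{C})^{-1}$-contribution to $b$ is the one non-obvious ingredient, but it follows cleanly from the $m$-dissipativity of $\widehat{C}$.
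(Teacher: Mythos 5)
Your proposal is correct and follows essentially the same route as the paper: the same dissipativity computation reducing $\Re\langle\caA_d z,z\rangle_{\caH}$ to $\Re\langle\widehat{C}u,u\rangle_U$, and the same range argument via $y=\lambda x-f$, elimination of $u$ through $(\lambda I-\widehat{C})^{-1}$, and Lax--Milgram with coercivity coming from $\Re\langle(\lambda I-\widehat{C})^{-1}w,w\rangle_U\ge 0$. The only difference is presentational (you phrase Lax--Milgram through the sesquilinear form and spell out the membership check $Ax+Bu\in X$, which the paper leaves implicit).
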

\begin{proof}
$$\begin{array}{lll}
 <\caA_d z,z>_{\caH,\caH}&=&a(y,x)-<Ax+Bu,y>_{X,X}+<B^*y+\widehat{C}u,u>_{U,U}\\
 &=&a(y,x)-a(x,y)-<Bu,y>_{V',V}+<B^*y,u>_{U,U}+<\widehat{C}u,u>_{U,U}\\
  &=&a(y,x)-a(x,y)+<\widehat{C}u,u>_{U,U}.\\
\end{array}
$$
Taking the real part of the above identity we get (\ref{en}) since $\widehat{C}$ is dissipative. Hence $\caA_d$ is dissipative.\\
We would like to show that there exists $\lambda>0$ such that  $\lambda I-\caA_d$ is surjective. Let $\lambda>0$ be given.
Clearly, we have   $\lambda\not\in\sigma(\widehat{C})$.
For $(f,g,h)\in \caH$, we look for $(x,y,u)\in \caD(\caA_d)$ such that $$(\lambda I-\caA_d)\begin{pmatrix}x\\
y\\
u\end{pmatrix}=\begin{pmatrix}f\\
g\\
h\end{pmatrix},$$
i.e. we are searching for $x\in V,\;y\in V,\; u\in D(\widehat{C})$ satisfying
$$\begin{array}{lll}
 \lambda x-y&=&f\\
 \lambda^2x+Ax+Bu&=&g+\lambda f\\
 (\lambda I-\widehat{C})u-B^*y&=&h.\\
\end{array}
$$
By Lax-Milgram lemma there exists a unique $x\in V$ such that
$$\left(\lambda^2+A+\lambda B(\lambda I-\widehat{C})^{-1}B^*\right)x=g+\lambda f+B(\lambda I-\widehat{C})^{-1}\left(B^*f-h\right).$$
In fact, we have  $\lambda^2+A+\lambda B(\lambda I-\widehat{C})^{-1}B^*\in \caL (V,V')$, $g+\lambda f+B(\lambda I-\widehat{C})^{-1}\left(B^*f-h\right)\in V'$
 and $$\Re\left<\left(\lambda^2+A+\lambda B(\lambda I-\widehat{C})^{-1}B^*\right)x,x\right>_{V',V}\ge \left<A x,x\right>_{V',V},$$
since
\begin{eqnarray*}\Re\left<B(\lambda I-\widehat{C} )^{-1}B^*x,x\right>_{V',V}&=&\Re\left<u,( \lambda I-\widehat{C})u\right>_{U,U}\\
&=&\lambda\|u\|^2-\Re\left<u,\widehat{C} u\right>_{U,U}\ge 0,\end{eqnarray*}
with $u=(\lambda I-\widehat{C} )^{-1}B^*x$, i.e. the coercivity property is satisfied. \\

Define $$u=(\lambda I-\widehat{C})^{-1}\left(h+B^*\left(\lambda x-f\right)\right),$$
by choosing $y=\lambda x-f$ we deduce the surjectivity of $\lambda I-A$. Finally, we conclude that  $\lambda I-A$ is bijective,
for all $\lambda >0$.\\
\end{proof}
Now, we are able to state the following existence result of problem (\ref{s1}).

\begin{proposition}\label{exist}
\noindent (i) For an initial datum $z_{0}\in \caH$, there exists a unique solution\\
$z\in C([0,\,+\infty),\, \caH)$
to system (\ref{s1}).
Moreover, if $z_{0}\in \dad$, then
\be \label{reg} z\in C([0,\,+\infty),\, \dad )\cap C^{1}([0,\,+\infty),\, \caH).\ee

(ii) For each $z_0\in \dad$, the energy $E(t)$ of the solution $z$ of (\ref{s1}), defined by $${E(t)=\dfrac{1}{2}\Vert z(t)\Vert_{\caH}^2,}$$
satisfies
\be \label{en} E^\prime (t)=\Re < \widehat{C}u(t),u(t)>\leq 0, \ee therefore the energy  is non-increasing.

Moreover, we have the following estimate
\be \label{est1} E(0)-E(t)=-\dint_0^t \Re< \widehat{C}u(s),u(s)>ds \leq \dfrac{1}{2} \|z_0\|_{\caH}^2,\,  \forall t\in [0,+\infty), \forall z_0\in \caH.\ee
\end{proposition}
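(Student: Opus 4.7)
The plan is to extract both assertions from the groundwork already done in Proposition \ref{exist0}. Part (i) follows directly from semigroup theory: since Proposition \ref{exist0} shows that $\caA_d$ is m-dissipative on $\caH$, the Lumer--Phillips theorem gives that $\caA_d$ generates a $C_0$-semigroup of contractions $(T(t))_{t\ge 0}$ on $\caH$. For $z_0 \in \caH$ we set $z(t) := T(t) z_0$ and obtain the mild solution in $C([0,+\infty),\caH)$; uniqueness also follows from the semigroup framework. For $z_0 \in \dad$, the classical theory of contraction semigroups immediately yields the strong-solution regularity stated in (\ref{reg}), namely $z\in C([0,+\infty),\dad)\cap C^1([0,+\infty),\caH)$, and the equation $z'(t)=\caA_d z(t)$ holds in $\caH$ for every $t\ge 0$.

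For part (ii), I would work with a classical solution ($z_0 \in \dad$) where the differentiation is legitimate. Differentiating the energy gives
\[
E'(t)=\tfrac12\frac{d}{dt}\|z(t)\|_{\caH}^2=\Re\langle z'(t),z(t)\rangle_{\caH,\caH}=\Re\langle\caA_d z(t),z(t)\rangle_{\caH,\caH}.
\]
Now I simply invoke the computation already carried out in the proof of Proposition \ref{exist0}, which established
\[
\Re\langle\caA_d z,z\rangle_{\caH,\caH}=\Re\langle\widehat{C}u,u\rangle_{U,U},
\]
the terms $a(y,x)-a(x,y)$ having purely imaginary real part (here I use that $a$ is the symmetric sesquilinear form associated with the self-adjoint operator $A$, so $a(y,x)=\overline{a(x,y)}$). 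Since $\widehat{C}$ is dissipative on $U$, the right-hand side is $\le 0$, which gives (\ref{en}) and shows $E$ is non-increasing.

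The estimate (\ref{est1}) then comes from integrating (\ref{en}) between $0$ and $t$, which yields
\[
E(0)-E(t)=-\int_0^t\Re\langle\widehat{C}u(s),u(s)\rangle_{U,U}\,ds,
\]
and bounding the left-hand side trivially by $E(0)=\tfrac12\|z_0\|_{\caH}^2$ since $E(t)\ge 0$. To pass from $z_0 \in \dad$ to arbitrary $z_0 \in \caH$ in (\ref{est1}), I would use a density argument: $\dad$ is dense in $\caH$, the map $z_0\mapsto z(t)$ is continuous on $\caH$ (contraction semigroup), and both sides of the identity depend continuously on $z_0$.

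There is no real obstacle here beyond routine verification; the only delicate point is the passage to $z_0\in\caH$ for (\ref{est1}), which requires reinterpreting the integral term through the semigroup's continuous dependence on initial data, but this is standard.
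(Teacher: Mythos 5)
Your proposal is correct and follows essentially the same route as the paper: part (i) via Lumer--Phillips using the m-dissipativity established in Proposition \ref{exist0}, and part (ii) by differentiating the energy for $z_0\in\dad$, reusing the identity $\Re\langle\caA_d z,z\rangle_{\caH,\caH}=\Re\langle\widehat{C}u,u\rangle_{U,U}$ from that proof, and integrating to get (\ref{est1}). Your explicit density argument for extending (\ref{est1}) to all $z_0\in\caH$ is a detail the paper leaves implicit, but it is the standard and intended justification.
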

\begin{proof}
$(i)$ is a direct consequence of  Lumer-Phillips theorem (see \cite{Pazy}).

$(ii)$ For an initial datum in $\dad$ from (\ref{reg}), we know  that $u$ is of class  $ C^{1}$ in time,
thus we can derive the energy
$E(t)$, and using Propostion \ref{exist0} we obtain:
$$E^\prime (t)=\Re<z^\prime,z>_{\caH,\caH}=\Re<\caA_d z,z>_{\caH,\caH}=\Re< \widehat{C}u,u>.$$
Hence the energy is non-increasing. Finally (\ref{est1}) is a direct consequence of (\ref{en}).
\end{proof}

Assume that $\widehat{C}$ can be written as $\widehat{C}=-C-DD^*$ where $C$ is a skew-adjoint operator on $U$,
$D\in {\cal L}(W,(D(C))^\prime)$, and  $W$ is supposed to be a Hilbert subspace of $U$ identified with its dual, thus $D^*\in {\cal L} (D(C),W) $.\\
Denote by  $\caA_c$  the operator obtained by replacing $\widehat{C}$ by $-C$ in the expression $\caA_d$. We can easily check that $\caA_c$ is closed
anti-symmetric, m-dissipative operator whose opposite $-\caA_c$ is also maximal dissipative, therefore $\caA_c$ is skew-adjoint and generates a unitary group.
Denote by  $\caA_r$ the   operator $$\caA_r:(x,y,u)\in \caH\mapsto (0,0,-DD^*u),$$
it is easy to see that $\caA_r$ is dissipative
and  $\caA_d=\caA_c+\caA_r.$ Note that the energy satisfies: 
\be
\label{Eprime} E^\prime (t)=-\|D^*u(t)\|_W^2.
\ee

\section{Some regularity results}
Let $T>0$ be fixed and $u\in L^2(0,T;U).$ Consider the evolution problem
\be \label{s3} z^\prime_2(t)=\caA_c z_2(t)+\caA_r z(t), z_2(0)=0,t\in[0,T],\ee
where $\caA_rz(t)=-(0,0,DD^*u(t)).$
\blem \label{estu2} Suppose that $D\in \caL(U)$. Then  problem (\ref{s3}) admits a unique solution $z_2(t)=(x_2(t),y_2(t),u_2(t))$ such that
$$u_2\in L^2(0,T;U),$$
satisfying the following  estimate
\be\label{est2}  \| D^* u_2\|_{L^2(0,T;U)}\le  c\| D^* u\|_{L^2(0,T;U)}, \ee
where $c$ is a positive constant.
\elem
\begin{proof} Clearly $\caA_rz(t)=-(0,0,DD^*u(t))\in C^1(0,T;\caH),$ and since $\caA_c$ generates a unitary group $e^{\caA_c.}$ on $\caH$, then (\ref{s3}) admits a unique solution given by
$$z_2(t)=\dint_0^t e^{\caA_c(t-s)} \caA_rz(s)ds=\dint_0^te^{\caA_c(s)}\caA_rz(t-s)ds,\forall t\in [0,T].$$
Moreover $D\in \caL(U)$ and 
$$\begin{array}{lll}
 \Vert  u_2\Vert^2_{L^2(0,T;U)}&=& \dint_0^T \Vert u_2(t)\Vert_U^2dt\\
&\leq& \dint_0^T \Vert z_2(t)\Vert_{\caH}^2 dt\\
&\leq& \dint_0^T \Vert\dint_0^t e^{\caA_c s}\; \caA_rz(t-s)ds\Vert_{\caH}^2 dt\\
&\leq& \dint_0^T \left(\dint_0^t \Vert e^{\caA_c s}\Vert \; \Vert \caA_rz(t-s)\Vert_{\caH} ds\right)^2 dt\\
&\leq& \dint_0^T (\dint_0^t \Vert \caA_rz(s)\Vert_{\caH} ds)^2 dt\\
&\leq& \dint_0^T (\dint_0^T \Vert \caA_rz(s)\Vert_{\caH} ds)^2 dt\\
&\leq& \dint_0^T (\dint_0^T \Vert DD^*u(s)\Vert_Uds)^2 dt\\
&\leq& \dint_0^T (\dint_0^T 1^2  ds)(\dint_0^T  \Vert DD^*u(s)\Vert_U^2   ds) dt\\
&\leq& \dint_0^T T \Vert D \Vert^2 \Vert D^*u\Vert_{L^2(0,T;U)}^2 dt \\
&\leq&  T^2 \Vert D \Vert^2 \Vert D^* u\Vert_{L^2(0,T;U)}^2.\\
\end{array}
$$
Consequently, as $\Vert D^*  u_2\Vert_{L^2(0,T;U)}\le\Vert D^* \Vert\Vert  u_2\Vert_{L^2(0,T;U)}$,
(\ref{est2}) holds with the constant $ T \Vert D \Vert\Vert D^* \Vert.$
\end{proof}

\section{Uniform stability}
In this section, we give sufficient and necessary condition  which lead to uniform stability of system (\ref{s1}).
We first introduce the conservative system associated with  the initial problem (\ref{Gs}) as
\be \label{Cs} 
\left \{ \begin{array}{lll}
x^{\prime \prime} (t)+ A x (t)+ B u(t)=0, &&t\in (0, +\infty )\\
 u^\prime (t) + C u(t)- B^* x^\prime(t)=0,&& t\in (0, +\infty ) \\
x(0)=x_0,x^\prime(0)=y_0, u(0)=u_0.  &&
\end{array}
\right.
\ee
The corresponding Cauchy problem can be written as
\be\label{s2}
 z^\prime (t)= \caA_c z(t), z(0)=z_0\in \dac.
\ee
Recall that the system (\ref{s2}) is the system (\ref{s1}) with $\widehat{C}=-C$ and that $(\caA_c,\dac)$ is given by 
$$\caA_c z=(y,-Ax-Bu,B^*y+Cu), \,\forall z=(x,y,u) \in \dac,$$ 
with
$$\dac=\lbrace (x,y,u) \in V\times V \times D(C), Ax+Bu \in X\rbrace.$$ Note also  that Proposition \ref{exist} still holds.
In order to get uniform stability we will need the following assumptions:

${\bf(O)}$ (Observability inequality) There exists a time $T>0$ and a constant $c(T)>0$ (which only depends on $T$) such that, for all $z_0\in \dac,$ the solution $z_1(t)=(x_1(t),y_1(t),u_1(t))$ of (\ref{s2}) satisfies the following observability estimate:
\be
\label{obs} 
\dint_0^T \|D^*u_1(s)\|_{W}^2ds \geq c(T) \|z_0\|_{\caH}^2.
\ee

${\bf({H})}$ (Transfer function estimate) Assume that for every $\lambda\in \C_+=\{\lambda\in\C|\Re \lambda>0\}$ 
$$\lambda\in \C_+\to H(\lambda)=-D^*(\lambda I+C+\lambda B^*(\lambda^2+A)^{-1}B)^{-1}D\in {\cal L}(W),$$
is bounded on $C_\beta=\{\lambda\in\C|\Re \lambda=\beta\}$, where $\beta$ is a positive constant.

\bthe\label{THexp}  Assume that assumption ${\bf(H)}$ is satisfied or that $D\in \caL(U)$. 
 Then  system (\ref{s1}) is exponentially stable, which means that the energy of the system satisfies
\be 
\label{exp1} 
E(t) \leq c \, e^{-\omega t} \, E(0), \forall t \in [0,+\infty),
\ee
where  $c$ and $\omega$ are two positive constants independent of the initial data $z_0\in \dad$ if and only if the inequality (\ref{obs}) is satisfied.
\ethe
By using \cite[Theorem 5.1]{miller:05} and \cite[Proposition 2.1]{amtuten} we have the following characterization of the uniform stabily of (\ref{s1}) by a frequency criteria (Hautus test).
\begin{corollary} \label{freqc}
Assume that assumption ${\bf(H)}$ is satisfied or that $D\in \caL(U)$. 
Then  system (\ref{s1}) is exponentially stable in the energy space if and only if  there exists a constant $\delta > 0$ such that for all $w \in \mathbb{R}, z \in \caD (\caA_c)$ we have
 \be
 \label{fc}
 \left\|(iw - \caA_c)z \right\|^2_{\caH} + \left\|\left( \begin{array}{cccl} 0 & 0 & D^* \end{array}\right) z \right\|^2_U \geq \delta \, \left\|z\right\|^2_\caH.
 \ee
\end{corollary}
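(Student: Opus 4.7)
The plan is to combine Theorem \ref{THexp} with known Hautus-type characterizations of observability for conservative systems, so that the chain

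exponential stability $\Longleftrightarrow$ observability inequality (\ref{obs}) $\Longleftrightarrow$ frequency inequality (\ref{fc})

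is established. The first equivalence is already provided by Theorem \ref{THexp} under the hypothesis $\mathbf{(H)}$ or $D\in\caL(U)$, so nothing new needs to be done there: it is enough to invoke it as a black box and focus on the second equivalence, which only concerns the conservative problem (\ref{s2}).

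For the second equivalence, I would view (\ref{s2}) as a conservative system on $\caH$ driven by the skew-adjoint generator $\caA_c$, together with the bounded observation operator $\mathcal{C}:\caH\to W$ defined by $\mathcal{C}(x,y,u)=D^*u$, which is precisely the operator appearing in (\ref{fc}). With this reading, the observability estimate (\ref{obs}) says that $(\caA_c,\mathcal{C})$ is exactly observable in some time $T>0$. The classical Hautus test of Miller \cite[Theorem~5.1]{miller:05} for skew-adjoint generators states that such exact observability is equivalent to the existence of $\delta>0$ with
\[
\|(iw-\caA_c)z\|_\caH^2+\|\mathcal{C}z\|_W^2\ge \delta\|z\|_\caH^2,\qquad \forall\, w\in\R,\ z\in\caD(\caA_c),
\]
which is the inequality (\ref{fc}). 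Proposition 2.1 of \cite{amtuten} is then used to reconcile the abstract statement of Miller's theorem with the specific form of the observation operator on $\caH=V\times X\times U$, ensuring that the estimate there is indeed the one written in (\ref{fc}).

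Putting these two equivalences together immediately yields the corollary. The main obstacle, though essentially bookkeeping, is to verify that the framework of \cite{miller:05} applies here: one must check that $\caA_c$ is genuinely skew-adjoint on $\caH$ (already noted after the introduction of $\caA_c$ in Section 2), that $\mathcal{C}$ fits Miller's assumptions (bounded from $\caH$ to $W$ since $D^*$ appears composed with the canonical projection onto the third component), and that the admissibility conditions required for the Hautus characterization are met. Once these are in place, the conclusion follows directly with no additional estimate to prove.
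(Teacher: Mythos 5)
Your proposal follows essentially the same route as the paper: Theorem \ref{THexp} supplies the equivalence of exponential stability with the observability inequality (\ref{obs}), and the Hautus-type resolvent criterion of \cite[Theorem 5.1]{miller:05}, adjusted via \cite[Proposition 2.1]{amtuten} to the observation operator $z=(x,y,u)\mapsto D^*u$ for the skew-adjoint generator $\caA_c$, converts (\ref{obs}) into the frequency inequality (\ref{fc}). The only caveat is that under assumption ${\bf(H)}$ alone the operator $D^*$ need not be bounded from $U$ to $W$ (it is only admissible), so your remark that the observation operator is bounded on $\caH$ is accurate only in the case $D\in\caL(U)$; this does not affect the argument, since Miller's criterion is stated for admissible observation operators.
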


\begin{proof} (of Theorem \ref{THexp}). Let $z(t)=(x(t),y(t),u(t))$ be the solution of (\ref{s1}) with initial datum $z_0\in \dad.$
Consider $z_1(t)=(x_1(t),y_1(t),u_1(t))$ the solution of (\ref{s2}) with initial datum $z_0\in \dad.$
Let  $z_2(t)=(x_2(t),y_2(t),u_2(t))$ be such that $z_2(t)=z(t)-z_1(t)$. Then  $z_2$ is solution of (\ref{s3}) and due to  Lemma \ref{estu2} its last component $u_2$  satisfies (\ref{est2}) if $D\in \caL(U)$. Otherwise, (\ref{est2}) holds true due to assumption  ${\bf(H)}$. 
Since $u=u_1+u_2,$ we get
$$\begin{array}{llll}
\|z_0\|_{\caH}^2 &\lesssim& \|D^*u_1\|_{L^2(0,T;W)}^2&\hspace{0.3cm} \mbox{ estimate } (\ref{obs})\\
&\lesssim & \|D^*u\|_{L^2(0,T;W)}^2+\|D^*u_2\|_{L^2(0,T;W)}^2&\hspace{0.3cm} \mbox{(triangle inequality)} \\
&\lesssim & \|D^*u\|_{L^2(0,T;W)}^2&\hspace{0.3cm} \mbox{(estimate (\ref{est2}))}. \\
\end{array}
$$
Indeed $x_2, u_2$ satisfies the system
\be \label{Ds1} 
\left \{ \begin{array}{lll}
x_2^{\prime \prime} (t)+ A x_2 (t)+ B u_2(t)=0, &&t\in (0, +\infty )\\
u_2^{\prime} (t)+C u_2(t)- B^* x_2^{\prime} (t)=-DD^*u(t),&& t\in (0, +\infty ) \\
x_2(0)=0,x_2^{\prime}(0)=0, u_2(0)=0.  &&
\end{array}
\right.
\ee
Extend $D^*u$ by zero on $\R\setminus[0,T]$. Since the system (\ref{Ds1}) is reversible by time we solve the system on $\R$. We obtain a
function $z\in C(\R;V)\cap C^1(\R;V)\cap L^2(\R;V)$ which is null for all $t\le 0$.
Let $\widehat{x}_2(\lambda)$, and $\widehat{u}_2(\lambda)$,  where $\lambda=\gamma+i\eta$, $\Re(\lambda)=\gamma>0$ and $\eta\in\R$, be the
respective Laplace transforms  of $x_2$ and $u_2$ with respect to $t$. Then $\widehat{x}_2$ and $\widehat{u}_2$ satisfy
\be \label{Ls} 
\left \{ \begin{array}{lll}
\lambda^2 \widehat{x}_2(\lambda)+A \widehat{x}_2(\lambda)+B\widehat{u}_2(\lambda)=0,&& \\
\lambda \widehat{u}_2(\lambda)+C \widehat{u}_2(\lambda)-B^*\lambda\widehat{x}_2(\lambda)=-DD^* \widehat{u}(\lambda).&&
\end{array}
\right.
\ee
Since $\lambda^2+A $ is invertible (Lax-Milgram lemma), we deduce from  the first equation of the system (\ref{Ls}) that
$$\widehat{x}_2=-(\lambda^2+A )^{-1}B\widehat{u}_2.$$
Substituting $\widehat{x}_2$ in the second equation of system (\ref{Ls}), we get
$$(\lambda I+C+\lambda B^*(\lambda^2+A)^{-1}B)\widehat{u}_2=-DD^* \widehat{u}.$$
Noting that the invertibility of $\lambda I+C+\lambda B^*(\lambda^2+A)^{-1}B$ follows from the invertibility of $\lambda I -\caA_c$ we obtain
$$D^*\widehat{u}_2=-[D^*(\lambda I+C+\lambda B^*(\lambda^2+A)^{-1}B)^{-1}D]D^* \widehat{u}$$ and by 
 assumption $\bf{(H)}$ estimate $(\ref{est2})$ holds. 
Finally, the inequality, $\|z_0\|_{\caH}^2 \lesssim\|D^*u\|_{L^2(0,T;U)}^2$, implies that there is a constant $c_1(T)$ which depends only of $T$ such that
$$ E(0)-E(T)\geq c_1(T) E(0).$$ But it is well known (see for instance \cite{ammari:01})  that the previous estimate is equivalent to (\ref{exp1}).  
\end{proof}

\section{Weaker decay}
In the case of non exponential decay in the energy space we give sufficient conditions for weaker decay properties. The statement of our second result requires some notations.

Let ${\caH}_1,{\caH}_2$ be two Banach spaces such that
$$\dad\subset {\caH}_1\subset {\caH}\subset {\caH}_2,$$
where
$$\|.\|_{\dad} \sim \|.\|_{{\caH}_1} $$
and
\be \label{interp}[{\caH}_1; {\caH}_2]_\theta ={\caH} \ee
for a fixed $\theta \in ]0; 1[,$  where $[: ; :]$ denotes the interpolation space (see for instance \cite{Triebel}).

Let $G : \R_+\longrightarrow \R_+$ be such that
$G$ is continuous, invertible, increasing on $\R_+$ and suppose that the function
$x\longmapsto \dfrac{1}{x^{\frac{\theta}{1-\theta}}}G(x)$ is increasing on $(0; 1)$.

\bthe \label{Thpolynomial} Assume that  the function $G$
satisfies the above assumptions  and that assumption ${\bf(H)}$ is satisfied or that $D\in \caL(U).$ 
Then the following assertions hold true:
\begin{enumerate}
\item
If for all non zero $z_0\in \dad,$ the solution $z_1(t)=(x_1(t),y_1(t),u_1(t))$ of (\ref{s2}) satisfies the following observability estimate:
\be\label{Obsv1}  \dint_0^T \|D^*u_1(s)\|_U^2ds \geq c(T) \|z_0\|_{\caH}^2 G\left(\dfrac{\|z_0\|_{{\caH}_2}^2}{\|z_0\|_{\caH}^2} \right),\ee  then
we have
\be\label{dp1} E(t)\lesssim \left[G^{-1}\left(\dfrac{1}{1+t}\right)\right]^{\frac{\theta}{1-\theta}}
\|z_0\|_{\dad}^2.\ee

\item
If for all non zero $z_0\in \dad,$ the solution $z_1(t)=(x_1(t),y_1(t),u_1(t))$ of (\ref{s2}) satisfies the following observability estimate:
\be \label{Obsv2}  \dint_0^T \|D^*u_1(s)\|_U^2ds \geq c(T) \|z_0\|_{{\caH}_2}^2,\ee  
then we have
\be\label{dp2}E(t)\lesssim \dfrac{1}{(1+t)^{\frac{\theta}{1-\theta}} }
\| z_0\| _{\dad}^2.\ee
\end{enumerate}
\ethe
\begin{proof} 
\begin{enumerate} 
\item 
Using the same arguments as in the proof of Theroem \ref{THexp} we get from (\ref{Obsv1})
$$ \forall z_0 \in \dad,\; \dint_0^T \|D^*u(s)\|_U^2ds \geq c(T) \|z_0\|_{\caH}^2 G\left(\dfrac{\|z_0\|_{{\caH}_2}^2}{\|z_0\|_{\caH}^2} \right).$$
The sequel follows the proof of Theorem 2.4 of \cite{ammari:01}, therefore we give the outlines below. 
Using (\ref{interp}) and the interpolation inequality 
$$\|z_0\|_{\caH} \leq  \|z_0\|_{{\caH}_1}^{1-\theta} \|z_0\|_{{\caH}_2}^{\theta}$$ we easily check   
$$\dfrac{\|z_0\|_{{\caH}_2}^{2}}{\|z_0\|_{\caH}^{2}}\geq 
\dfrac{
\|z_0\|_{\caH}^{\frac{2-2\theta}{\theta}}}
{\|z_0\|_{{\caH}_1}^{\frac{2-2\theta}{\theta}}},\; \forall z_0 \in \dad.$$ 
Consequently, using (\ref{Eprime}) and the fact that the function $t \mapsto \|z(t)\|_{\caH}$ is nonincreasing and $G$ is increasing we obtain the existence of a constant $K_1>0$ such that 
$$\|z(T)\|_{\caH}^2\leq \|z(0)\|_{\caH}^2-K_1 \|z(0)\|_{\caH}^2 G\left(\dfrac{
\|z(T)\|_{{\caH}}^{\frac{2-2\theta}{\theta}}}
{\|z(0)\|_{{\caH}_1}^{\frac{2-2\theta}{\theta}}}\right).$$ 
Applying the same arguments on successive intervals $[kT,(k+1)T],\; k=1,2,...$ we obtain the existence of a constant $K_2$ such that 
$$\|z((k+1)T)\|_{\caH}^2\leq \|z(k T)\|_{\caH}^2-K_2 \|z(kT)\|_{\caH}^2 G\left(\dfrac{
\|z((k+1) T)\|_{{\caH}}^{\frac{2-2\theta}{\theta}}}
{\|z(0)\|_{{\caH}_1}^{\frac{2-2\theta}{\theta}}}\right),\; \forall z_0 \in \dad.$$ 
If we set ${\cal E}_k=G\left(\dfrac{
\|z( k T)\|_{{\caH}}^{\frac{2-2\theta}{\theta}}}
{\|z(0)\|_{{\caH}_1}^{\frac{2-2\theta}{\theta}}}\right),$ the previous inequality, the property of $G$ and the fact that  $t \mapsto \|z( T)\|_{{\caH}}$ is nonincreasing then we get 
$$\dfrac{
\|z((k+1)T)\|_{\caH}^2}{\|z(k  T)\|_{\caH}^2}
\dfrac{{\cal E}_k}{{\cal E}_{k+1}}{\cal E}_k\leq
{\cal E}_k-K_2 {\cal E}_{k+1}^2.$$
Equivalently, we have 
\be\label{GG} \dfrac{
\frac{1}
{\left[\frac{\|z( k T)\|_{{\caH}}^{\frac{2-2\theta}{\theta}}}
{\|z( 0)\|_{{\caH}}^{\frac{2-2\theta}{\theta}}}\right]^{\frac{\theta}{1-\theta}}}
G\left(\dfrac{
\|z(k T)\|_{{\caH}}^{\frac{2-2\theta}{\theta}}}
{\|z(0)\|_{{\caH}_1}^{\frac{2-2\theta}{\theta}}}\right)  }
{\frac{1}
{\left[\frac{\|z(( k+1) T)\|_{{\caH}}^{\frac{2-2\theta}{\theta}}}
{\|z( 0)\|_{{\caH}}^{\frac{2-2\theta}{\theta}}}\right]^{\frac{\theta}{1-\theta}}}
G\left(\dfrac{
\|z(( k+1) T)\|_{{\caH}}^{\frac{2-2\theta}{\theta}}}
{\|z(0)\|_{{\caH}_1}^{\frac{2-2\theta}{\theta}}}\right)  }{\cal E}_{k+1}\leq {\cal E}_k-K_2 {\cal E}_{k+1}^2.\ee
Combining (\ref{GG}) and the fact that the function
$x\longmapsto \dfrac{1}{x^{\frac{\theta}{1-\theta}}}G(x)$ is increasing on $(0; 1),$
we get 
$$
{\cal E}_{k+1}\leq {\cal E}_k-K_2 {\cal E}_{k+1}^2.$$
We thus deduce  the existence of a constant $M>0$ such that ${\cal E}_k\leq \dfrac{M}{k+1}$ and we finally get (\ref{dp1}).
\item
As for $1.$ the proof is similar to  the second assertion of Theorem 2.4 of \cite{ammari:01} which is based on Lemma 5.2 of \cite{ammari:00} and is left to the reader. 
\end{enumerate}
\end{proof}

\section{Examples}
\subsection*{\underline{Beam System}}

We consider the following beam equation:
\be \label{Bs} 
\left \{ \begin{array}{lll}
u_{tt}(x,t)+u^{(4)}(x,t)=0, && 0<x<1, t\in [0, \infty )\\
\eta_t(t)+\beta\eta(t)-u_t(1,t)=0, && 0<x<1, t\in [0, \infty )\\
u(0,t)=u'(0,t)=u''(1,t)=0,&&  t\in [0, \infty ) \\
u'''(1,t)=\eta(t)&&\\ 
\end{array} 
\right. 
\ee
with the initial conditions
$$u(x,0)=u_0(x),u_t(x,0)=u_1(x),\eta(0)=\eta_0.$$

In this case
$$X=L^2(0,1), U=\C, V=\{u\in H^2(0,1):u(0)=u'(0)=0\},$$
$$\caD(A)=\{u\in H^4(0,1):u(0)=u'(0)=u''(1)=0,u^{(3)}(1)=0\},$$
$$a(u,v)=\int_0^1{\bar u}^{(2)}v^{(2)}dx \,(u,v \in V), \,Au=u^{(4)}\,(u\in \caD(A)),$$
$$B^*=\delta_1, B^*\varphi=\varphi(1)\, (\varphi \in V).$$
$$(B\eta,\varphi)_{V', V}=\bar\eta\varphi(1)\,(\eta\in\C, \varphi \in V),$$ and 
\beqs \widehat{C}:&\C\to \C&\\
&\eta\to-\beta\eta&,
\eeqs
where $\beta$ is a postive constant.\\

Note that $\widehat{C}$ is bounded, so we only need to find the observability inequality in order to deduce
the type of stability of the system. Since $B \in \caL (U,V')$ then $B\eta=\eta\cdot B1$, and since $B1\in V'$ 
and $ A\in \caL (V,V')$ then there exists a unique $u_0\in V$ such that $B1=Au_0$. Indeed, it is easy to check that 
$u_0(x)=\frac{x^2}{2}-\frac{x^3}{6}$. Moreover, $C=0$ and $D=\sqrt{\beta}$.

Remark that in this case $$\caD(\caA_c)=\caD(\caA_d)=\lbrace (u,v,\eta) \in V \times V \times \C  :  A u + B\eta \in L^2(0,1)\rbrace,$$ and 
\be\label{consbe}\caA_c\begin{pmatrix}
 u\\v\\\eta
 \end{pmatrix}=\begin{pmatrix}
 v\\-Au-B\eta\\B^*v
 \end{pmatrix}.
\ee  
Note that  since $\caD(A_c)$ is compactly injected in $\caH$, then $\caA_c$ has a compact resolvent and thus its spectrum is discrete. In addition, since $\caA_c$ is a skew-adjoint real operator, then its spectrum is constituted of pure imaginary conjugate eigenvalues. Now, let $\lambda=i\mu\in\sigma(\caA_c)$ with $U_\mu$ an associated eigenvector then $\bar\lambda=-i\mu\in\sigma(\caA_c)$ with $\bar{U}_\mu$ an associated eigenvector. Since the eigenvalues are conjugates , it is sufficient then to study  $\mu\ge 0$.
\begin{lemma}\label{lcheqb}
The eigenvalues of $\caA_c$ are algebraically simple. 
 Moreover, $0\in \sigma(\caA_c)$ and for every  $\lambda=i\mu\in\sigma(\caA_c), \mu > 0$,  $\mu$ satisfies the following characteristic equation,
\be\label{cheqb}f(\mu)=
{\mu}\sqrt{\mu}+{\mu}\sqrt{\mu}\cosh(\sqrt{\mu})\cos(\sqrt{\mu})+\sin(\sqrt{\mu})\cosh(\sqrt{\mu})-
\cos(\sqrt{\mu})\sinh(\sqrt{\mu})=0.\ee
\end{lemma}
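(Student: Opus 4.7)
The plan is to reduce the eigenvalue equation $\caA_c Z=\lambda Z$, with $Z=(u,v,\eta)\in\dac$, to a scalar fourth order boundary value problem, and to read off both the simplicity claim and the characteristic equation from it. Since $\caA_c$ is skew-adjoint with compact resolvent (as observed just before the statement), it is normal, so algebraic and geometric multiplicity coincide for every eigenvalue; I therefore only need to establish geometric simplicity in addition to locating the eigenvalues.

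Using (\ref{consbe}) the eigenvalue system becomes $v=\lambda u$, $Au+B\eta=-\lambda^2 u$ and $B^*v=v(1)=\lambda\eta$. When $\lambda\ne 0$, the third equation yields $\eta=u(1)$. Requiring $Au+B\eta\in L^2(0,1)$ and integrating the form $a$ by parts against arbitrary $\varphi\in V$ isolates the coefficients of $\varphi(1)$ and $\varphi'(1)$, producing the natural boundary conditions $u''(1)=0$ and $u'''(1)=\eta$, in agreement with the physical conditions in (\ref{Bs}). Together with $u(0)=u'(0)=0$ inherited from $V$, this reduces the whole problem to
\begin{equation*}
u^{(4)}=-\lambda^2 u \text{ on }(0,1),\qquad u(0)=u'(0)=u''(1)=0,\qquad u'''(1)=u(1).
\end{equation*}

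I would first handle $\lambda=0$ separately: under these boundary conditions $u^{(4)}=0$ admits the one-parameter family $u(x)=c(x^3-3x^2)$, which settles $0\in\sigma(\caA_c)$ with a one-dimensional eigenspace. For $\lambda=i\mu$, $\mu>0$, I would set $k=\sqrt{\mu}$ so that $u^{(4)}=k^4 u$, and write the general solution compatible with the clamped conditions at $0$ as
\begin{equation*}
u(x)=A[\cosh(kx)-\cos(kx)]+B[\sinh(kx)-\sin(kx)].
\end{equation*}
Imposing the two remaining conditions at $x=1$ gives a $2\times 2$ linear system in $(A,B)$, and non-triviality forces its determinant to vanish. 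Expanding that determinant, using only $\cosh^2 k-\sinh^2 k=1$ and $\cos^2 k+\sin^2 k=1$, should leave exactly the left-hand side of (\ref{cheqb}) up to an overall factor~$2$.

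Geometric simplicity will then be automatic: the coefficient $\cosh k+\cos k$ in the row coming from $u''(1)=0$ is strictly positive for every $k>0$ (since $\cosh k>1$ and $\cos k\ge-1$), so whenever the determinant vanishes the kernel of the $2\times 2$ system is exactly one-dimensional. The only delicate step will be the trigonometric--hyperbolic bookkeeping in the determinant expansion, which is tedious but routine; everything else is essentially forced by the reduction.
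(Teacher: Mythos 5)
Your argument is correct, and I checked the computation you defer: with $u=A[\cosh(kx)-\cos(kx)]+B[\sinh(kx)-\sin(kx)]$, the conditions $u''(1)=0$ and $u'''(1)=u(1)$ give the matrix with rows $(\cosh k+\cos k,\ \sinh k+\sin k)$ and $\bigl(k^3(\sinh k-\sin k)-(\cosh k-\cos k),\ k^3(\cosh k+\cos k)-(\sinh k-\sin k)\bigr)$, whose determinant is exactly $2\bigl[k^3+k^3\cosh k\cos k+\sin k\cosh k-\cos k\sinh k\bigr]=2f(\mu)$, i.e. precisely (\ref{cheqb}). Your route differs from the paper's in its implementation: the paper keeps all four fundamental solutions and the unknown $\eta$ as data, arriving at the inhomogeneous $4\times4$ system (\ref{MCV}); it must then prove separately that $\eta\neq 0$ (via the explicit solution $c(\sinh(\sqrt{\mu}(1-x))+\sin(\sqrt{\mu}(1-x)))$ and the positivity of $\cosh\sqrt{\mu}+\cos\sqrt{\mu}$), deduces geometric simplicity from the impossibility of an eigenvector with vanishing last component, inverts $\widetilde M$, and substitutes into $u(1)=\eta$. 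You instead eliminate $\eta$ at the outset and read both the characteristic equation and geometric simplicity off a homogeneous $2\times2$ system whose first row is nonzero --- interestingly, via the same positivity fact $\cosh k+\cos k>0$ --- while algebraic simplicity follows, as in the paper, from skew-adjointness (normality) plus compact resolvent. Your version is shorter and avoids the $\eta\neq0$ detour and the matrix inversion; the paper's longer route is not wasted, though, since the explicit coefficients $C=\widetilde M^{-1}V_0$ are reused in Lemma \ref{asmbe} to get the eigenvector normalization asymptotics (\ref{etab}).

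Two small points to fix in a write-up. First, your treatment of $\lambda=0$ is stated imprecisely: the closure relation $\eta=u(1)$ (hence $u'''(1)=u(1)$) is only available for $\lambda\neq0$, and indeed $u(x)=c(x^3-3x^2)$ does \emph{not} satisfy $u'''(1)=u(1)$ for $c\neq0$ (it gives $6c$ versus $-2c$). At $\lambda=0$ the correct reduced system is $u^{(4)}=0$, $u(0)=u'(0)=u''(1)=0$, with $\eta=u'''(1)$ free; this is solved exactly by $u=c(x^3-3x^2)$, $\eta=6c$, which recovers the paper's kernel vector $\eta(-u_0,0,1)$ and the one-dimensionality you claim, so the content is right but the phrase ``under these boundary conditions'' should be corrected. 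Second, in the reduction you should record that $u\in H^4$ (from $u^{(4)}=-\lambda^2u\in L^2$ distributionally) before integrating $a$ by parts, and note that $u\not\equiv0$ for a nonzero eigenvector (since $v=\lambda u$ and $\eta=u(1)$), so that nontriviality of $(A,B)$ is indeed forced.
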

\begin{proof}
First it is easy to see that  $0$ is a simple eigenvalue of $\caA_c$ and that an associated eigenvector is \\
$U=\eta(-u_0,0,1)^{\top}, \eta\in\C.$

Let $\lambda=i\mu\in\sigma(\caA_c), \mu > 0$, and let $U=(u,v,\eta)^\top\in \caD(\caA_c)$ be a nonzero associated eigenvector. Then $U$ satsifies $$\caA_c ( u, v, \eta)^{\top}=\lambda( u, v, \eta)^{\top},$$   which is equivalent to 
\be 
\left \{ \begin{array}{lll}
v=\lambda u\\
B^*v=\lambda\eta\\
-Au-\eta Au_0=\lambda v=\lambda^2 u.
\end{array} 
\right. 
\ee
We then deduce that 
$$A(u+\eta u_0)=-\lambda^2 u,  B^*u=\lambda u(1)=\eta.$$
But as $U\in \caD(\caA_c)$, then $Au+B\eta=A(u+\eta u_0)\in L^2(0,1)$, which implies that  $u+\eta u_0\in \caD(A)$ and that $u\in H^4(0,1)$
satisfies
 \begin{equation}\label{conds}u(0)=u'(0)=u''(1)=0, u'''(1)=\eta.\end{equation}
However, $A(u+\eta u_0)=(u+\eta u_0)^{(4)}= u^{(4)},$ thus we need to solve $u^{(4)}=-\lb^2 u=\mu^2u, \; u(1)=\eta$ with $u$ satisfying (\ref{conds}). 
We deduce that  $u$ could be written as $$u=c_1\sin(\sqrt{\mu}x)+c_2\sinh(\sqrt{\mu}x)+c_3\cos(\sqrt{\mu}x)+c_4\cosh(\sqrt{\mu}x),$$
with $C=(c_1,c_2,c_3,c_4)^\top$ satisfying 
\be \label{MCV} \widetilde{M}C=V_0\ee where
$$\widetilde{M}=\begin{pmatrix}0&0&1&1\\1&1&0&0\\-\sin(\sqrt{\mu})&\sinh(\sqrt{\mu})&-\cos(\sqrt{\mu})&\cosh(\sqrt{\mu})\\-\cos(\sqrt{\mu})&\cosh(\sqrt{\mu})&\sin(\sqrt{\mu})&\sinh(\sqrt{\mu})
\end{pmatrix}, V_0=\begin{pmatrix}
0\\0\\0\\\frac{\eta}{\mu\sqrt{\mu}}
\end{pmatrix}.
$$

We first remark that $\eta \neq  0.$ Otherwise, since  $u$ satisfies $u^{(4)}=\mu^2 u$ and the boundary conditions $u(1)=u''(1)=u'''(1)=0$, then there exists a constant $c\in \R$ such that $u$ is  given by $$u(x)=c(\sinh(\sqrt{\mu}(1-x))+\sin(\sqrt{\mu}(1-x)).$$ 
But $\cosh(\sqrt{\mu})+\cos(\sqrt{\mu})>0$,  then $u'(0)=0$  implies that $c=0$ and hence $U=(u,\lambda u, \eta)^\top=0$  which is a contradiction. \\

Consequently, each eigenvalue  of $\caA_c$ is simple. In fact, suppose to the contrary  that there exists $\mu\not=0$ such that $\lambda=i\mu$ is not algebraically simple. Then as 
$\caA_c$ is skew-adjoint, $\lambda=i\mu$ is not geometrically simple. Thus there exists at least two independent eigenvectors  $U_i=(u_i,v_i,\eta_i),i=1,2,$ corresponding to  $\lambda$, and hence $U=\eta_2 U_1 - \eta_1 U_2=(u,v,\eta)=(u,v,0)$ is an eigenvector which is impossible.

Going back to (\ref{MCV}), we get from the first three equations, 
$$c_2=-c_1,\; c_4=-c_3,\; c_3=-c_1\dfrac{\sin(\sqrt{\mu})+\sinh(\sqrt{\mu})}{\cos(\sqrt{\mu})+\cosh(\sqrt{\mu})}.$$
Therefore the last equation of (\ref{MCV}) becomes 
$$-\frac{2 c_1 (1+\cos(\sqrt{\mu })\cosh (\sqrt{\mu }))}{\cos(\sqrt{\mu })+\cosh(\sqrt{\mu })}=\dfrac{\eta}{\mu\sqrt{\mu}}.$$

As $\eta\not=0$ then the determinant of $\widetilde{M}$  which is  given by  $\det(\widetilde{M})=-2\left(1+\cos(\sqrt{\mu})\cosh(\sqrt{\mu})\right)$ is nonzero and $C$ is given by 
 $$C=\widetilde{M}^{-1}V_0=\frac{\eta}{2{\mu}\sqrt{\mu}(1+\cos(\sqrt{\mu})\cosh(\sqrt{\mu}))}\begin{pmatrix}
-\cos(\sqrt{\mu})-\cosh(\sqrt{\mu})\\\cos(\sqrt{\mu})+\cosh(\sqrt{\mu})\\\sin(\sqrt{\mu})+\sinh(\sqrt{\mu})\\-\sin(\sqrt{\mu})-\sinh(\sqrt{\mu})
\end{pmatrix}.$$
Substituting $C$  in the condition $u(1)=\eta$, we finally  deduce that  $\mu$ satisfies the charateristic equation (\ref{cheqb}).
\end{proof}

Now, we study the asymptotic behavior of the eigenvalues of $\caA_c$.
\begin{lemma} \label{asmbe}
There exists  $k_0\in \N$ large enough such that for all $k\ge k_0$ there exists one and only one $\lb_k=i\mu_k$ eigenvalue of $\caA_c$ with $\sqrt{\mu_k}\in [k\pi,(k+1)\pi]$.  Moreover, as $k\to\infty$, we have the following 
\be\label{tylb}\sqrt{\mu_k}=\frac{\pi}{2}+k\pi+\frac{1}{k^3\pi^3}+o\left(\frac{1}{k^3}\right).\ee
Let $U_{1,k}=(u_{1,k}, \lambda_{k} u_{1,k}, \eta_{1,k})$ be the associated normalized eigenvector. Then,
 \be\label{etab} |\eta_{1,k}|^2=\frac{4}{k^4}+o\left(\frac{1}{k^4}\right).\ee
\end{lemma}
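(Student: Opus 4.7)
The plan has two stages: first locate the eigenvalues $\mu_{k}$ and extract the asymptotic of $\sqrt{\mu_{k}}$, then normalize the corresponding eigenvector to compute $|\eta_{1,k}|^{2}$.

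\textbf{Location of $\mu_{k}$.} Set $s=\sqrt{\mu}$ and divide the characteristic equation (\ref{cheqb}) by $\cosh s$ to obtain
$$
g(s):=\frac{s^{3}}{\cosh s}+s^{3}\cos s+\sin s-\cos s\,\tanh s=0.
$$
The ``exponentially small'' pieces $s^{3}/\cosh s$ and $\cos s(1-\tanh s)$ are of order $s^{3}e^{-s}$, so the zeros of $g$ lie close to those of the reduced function $h(s):=(s^{3}-1)\cos s+\sin s$. On each interval $[k\pi,(k+1)\pi]$ with $k\geq k_{0}$, $h$ has a unique simple sign-change near $s_{k}^{(0)}:=\pi/2+k\pi$, where $|h'|=O(s^{3})$, and an IVT/Rouch\'e argument transfers existence and uniqueness to $g=0$. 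Writing $s_{k}=s_{k}^{(0)}+\varepsilon_{k}$ and using $\cos s_{k}=(-1)^{k+1}\sin\varepsilon_{k}$, $\sin s_{k}=(-1)^{k}\cos\varepsilon_{k}$, the leading balance $\varepsilon_{k}(s_{k}^{3}-1)=1+O(s_{k}^{3}e^{-s_{k}})$ gives $\varepsilon_{k}=1/s_{k}^{3}+O(1/s_{k}^{6})$, and substituting $s_{k}^{3}=(k\pi)^{3}(1+O(1/k))$ produces (\ref{tylb}).

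\textbf{Shape of $u_{1,k}$ and computation of $|\eta_{1,k}|^{2}$.} On the characteristic surface one has $s^{3}\Delta=\cos s\,\sinh s-\sin s\,\cosh s$, and with $\sigma:=(-1)^{k}$ the coefficient formulas of Lemma \ref{lcheqb} give $c_{1}=\sigma\eta/2+O(\eta/s^{3})$, $c_{3}=-\sigma\eta/2+O(\eta/s^{3})$ with $\eta=\eta_{1,k}$. Substituting into $u_{1,k}(x)=c_{1}(\sin sx-\sinh sx)+c_{3}(\cos sx-\cosh sx)$ and regrouping the hyperbolic and oscillatory parts separately yields
$$
u_{1,k}(x)=\frac{\sigma\eta}{2}\bigl(\sin sx-\cos sx+e^{-sx}\bigr)+r_{k}(x),
$$
where $r_{k}$ collects the $O(\eta/s^{3})$ polynomial corrections together with an exponentially small boundary-layer piece concentrated near $x=1$ (whose role is to enforce $u(1)=\eta$). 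The identity $\int_{0}^{1}(\sin sx-\cos sx)^{2}\,dx=1-(1-\cos 2s)/(2s)$ at $s=s_{k}$ (where $\cos 2s_{k}=-1+O(\varepsilon_{k}^{2})$), combined with the exponential smallness of the remaining integrals, gives $\|u_{1,k}\|_{L^{2}}^{2}=|\eta_{1,k}|^{2}/4+O(|\eta_{1,k}|^{2}/s)$. Two integrations by parts using $u(0)=u'(0)=u''(1)=0$, $u'''(1)=\eta$, $u(1)=\eta$ and $u^{(4)}=\mu^{2}u$ yield $a(u_{1,k},u_{1,k})=\mu_{k}^{2}\|u_{1,k}\|^{2}-|\eta_{1,k}|^{2}$, and combined with $\|\lambda_{k}u_{1,k}\|_{X}^{2}=\mu_{k}^{2}\|u_{1,k}\|^{2}$ this produces $\|U_{1,k}\|_{\caH}^{2}=2\mu_{k}^{2}\|u_{1,k}\|^{2}$. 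Imposing the normalization $\|U_{1,k}\|_{\caH}=1$ then gives $\|u_{1,k}\|^{2}=1/(2\mu_{k}^{2})$, and the $L^{2}$-estimate yields $|\eta_{1,k}|^{2}=2/\mu_{k}^{2}+o(1/\mu_{k}^{2})$; expanding $\mu_{k}^{2}=s_{k}^{4}$ via (\ref{tylb}) produces (\ref{etab}).

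\textbf{Main obstacle.} The delicate point is the severe cancellation inside $c_{1}\sinh sx+c_{3}\cosh sx$: each summand has $L^{2}$-norm of order $e^{s}/\sqrt{s}$ individually, yet their leading parts cancel to $O(\eta e^{-s(1-x)})$, which is precisely what is needed to enforce $u(1)=\eta$. A naive use of $c_{1}\approx\sigma\eta/2$, $c_{3}\approx-\sigma\eta/2$ loses this boundary-layer contribution, so estimating $\|u_{1,k}\|^{2}$ accurately requires splitting $u_{1,k}$ into its bounded ``oscillatory plus decaying-exponential'' main part and a boundary-layer residue, then verifying that the latter contributes only $O(|\eta_{1,k}|^{2}/s)$ to $\|u_{1,k}\|^{2}$.
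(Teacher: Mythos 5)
Your argument follows essentially the same route as the paper's proof: the same reduction of the characteristic equation (\ref{cheqb}) to a perturbation of $\cos s=0$ (you divide by $\cosh s$, the paper by $z^3e^z/2$) with a Rouch\'e/IVT localisation giving (\ref{tylb}); the same coefficient formulas from Lemma \ref{lcheqb} with the exponential cancellation between the $\sinh$ and $\cosh$ parts (the paper encodes it through $c_3=-\beta_k c_1$, $\beta_k-1=O(e^{-z_k})$, you through the smallness of $c_1+c_3$, which is the same fact); and the same integration-by-parts identity $\|U_{1,k}\|_{\caH}^2=2\mu_k^2\|u_{1,k}\|_{L^2}^2$ combined with $\|u_{1,k}\|_{L^2}^2=\tfrac14|\eta_{1,k}|^2(1+o(1))$. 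One caveat: your final formula $|\eta_{1,k}|^2=2/\mu_k^2+o(1/\mu_k^2)$ does \emph{not} literally "produce (\ref{etab})", since $\mu_k^2=s_k^4=\pi^4k^4(1+o(1))$ gives $|\eta_{1,k}|^2=\tfrac{2}{\pi^4k^4}+o(1/k^4)$ rather than $\tfrac{4}{k^4}$; in fact the paper's own intermediate steps ($\int_0^1u_k^2\to\tfrac14$ and $\|U_k\|^2=2\mu_k^2\int_0^1u_k^2$) lead to the same value, so the constant in (\ref{etab}) (and in the paper's $\|U_k\|^2=\tfrac{k^4}{4}+o(k^4)$) is a slip in the paper, and only the order $|\eta_{1,k}|^2\sim k^{-4}$ is used in Proposition \ref{obsbe}. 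Also, the cross term between the main oscillatory part and the boundary-layer residue contributes $O(|\eta_{1,k}|^2/\sqrt{s})$, not $O(|\eta_{1,k}|^2/s)$, to $\|u_{1,k}\|_{L^2}^2$, but this is still $o(|\eta_{1,k}|^2)$ and harmless.
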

\begin{proof}

\textbf{First step.} 
Let $z=\sqrt{\mu}$ where $i\mu\in \sigma(\caA_c)$ and $\mu>0$. Then  by (\ref{cheqb}), we have 
$$f(z^2)=z^3+\cosh z(z^3\cos z+\sin z)-\cos z \sinh z=0.$$
Replacing $\cosh z=\frac{e^z+e^{-z}}{2}$ and $\sinh z=\frac{e^z-e^{-z}}{2}$ in $f(z^2)$ and dividing by $\frac{z^3e^z}{2}$, we deduce that  $z$ satisfies $\tilde f(z)=0$, where
$$\tilde f(z)=\cos z +\frac{\sin z- \cos z}{z^3}+2e^{-z}+e^{-2z}\left(\cos z+\frac{\cos z}{z^3}+\frac{\sin z}{z^3}\right).$$

For $z$ large enough we have
$$\tilde f(z)=\cos z+ O(1/{z^3}).$$
It can be easily checked that  for $k$ large enough, $\tilde f$ doesn't admit any root outside the ball $B_k=B\left(z_k^0, \frac 1 {k^2}\right)$, with $z_k^{0}=\frac\pi 2+k\pi$.
Then by Rouch\'e's Theorem  applied on $B_k$,  we deduce that for $k$ large enough  there exists a unique root $z_k$ of $\tilde f$ in $[k\pi,(k+1)\pi]$. Moreover, $z_k$ satisfies  $$z_k=\frac{\pi}{2}+k\pi+\epsilon_k,$$
with $\epsilon_k=o(1)$.
 Since $z_k$ satisifes $\tilde f(z_k)=0$, then $\epsilon_k$ satisfies 
$$\cos \left(\frac{\pi}{2}+k\pi+\epsilon_k\right) +\frac{\sin \left(\frac{\pi}{2}+k\pi+\epsilon_k\right) +o(1)}{k^3\pi^3+o(k^3)}+O(e^{-z_k})=0.$$
Hence
$$-\sin (\epsilon_k)+\frac{\cos (\epsilon_k)}{k^3\pi^3}+o\left(\frac{1}{k^3}\right)=0,$$
and thus
$$-k^3\epsilon_k+o(k^3\epsilon_k^2)+\frac{1}{\pi^3}+o(k^2\epsilon_k)+o(1)=0,$$
which gives
$$\epsilon_k=\frac{1}{\pi^3k^3}+o(1/{k^3}).$$
Therefore, (\ref{tylb}) follows for $\mu_k=z_k^2$.

\textbf{Second step.} Set $\beta_k=\dfrac{\sin(z_k)+\sinh(z_k)}{\cos(z_k)+\cosh(z_k)}$.  Then
\be
\label{beta} \beta_k=\dfrac{e^{z_k}+2\sin(z_k)-e^{-z_k}}{e^{z_k}+2\cos(z_k)+e^{-z_k}}=1+o(e^{-z_k}).
\ee
By the proof of Lemma \ref{lcheqb}, the last component $\eta_k^1$ of $U_k^1$ is nonzero and thus 
$$U_k=(u_k, i z_k^2 u_k, 1)=\frac{1}{\eta_{1,k}}U_{1,k}$$
is an associated eigenvector to $iz_k^2$  with $u_k$ having the form,
$$u_k(x)=c_{1k}\sin({z_k}x)+c_{2k}\sinh({z_k}x)+c_{3k}\cos({z_k}x)+c_{4k}\cosh({z_k}x),$$ with
$$c_{2k}=-c_{1k},\; c_{4k}=-c_{3k},\; c_{3k}=-\beta_k c_{1k}.$$
It follows that 
\be\label{uu} u_k(x)=c_{1k}\left[(\sin({z_k}x)-\sinh({z_k}x)-\cos({z_k}x)+\cosh({z_k}x))+(\beta_k-1) (-\cos({z_k}x)+\cosh({z_k}x))\right].\ee

In order to get the behavior of $\eta_k = \frac{1}{\|U_k\|}$, it is enough to compute the integral $\int_{0}^{1} |u_k|^2 dx$. Indeed,  multiplying $u^{(4)}_k= - \lb^2 u_k = \mu^2 u_k$ by $\bar{u}_k$, integrating by parts and noting that
$u_k(0) = u^\prime_k (0) = 0
,\,\, u_k(1) = u^{\prime \prime \prime}_k (1)=1$ we obtain 
$$\int_{0}^{1}|u_k^{\prime \prime} |^2 dx={\mu_k}^2\int_{0}^{1} u_k^2 dx -1,$$
and hence
$$\|U_k\|^2=\int_{0}^{1} u_{kxx}^2 dx+{\mu_k}^2\int_{0}^{1} u_k^2 dx +1=2{\mu_k}^2\int_{0}^{1} u_k^2dx.$$
Since 
\begin{eqnarray*}
2 z_k^3c_{1k}&=&\frac{-\cos z_k - \frac{e^{z_k}}{2}(1+e^{-2z_k})}{1+ \frac{e^{z_k}}{2}\cos z_k (1+e^{-2z_k})}\\
&=&\frac{-1+O(e^{-k})}{(-1)^{k+1}\sin{\epsilon_k} +O(e^{-k})},
\end{eqnarray*}
we deduce that 
\be
\label{c1}c_{1k}=\frac{(-1)^{k}}{2}+o(1).
\ee

As
$$\int_0^1 (\sin(zx)-\sinh(zx)-\cos(zx)+\cosh(zx))^2dx=\int_0^1 (\sin(zx)-\cos(zx))^2dx+o(1)=1+o(1),$$
and
$$\int_0^1 (-\cos(zx)+\cosh(zx))^2dx=\frac{e^{2 z}}{8 z}+o(\frac{e^{2 z}}{8 z}),$$
we consequently deduce  due to  (\ref{beta}), (\ref{uu})  and (\ref{c1})  that
$$\int_0^1 u_k^2(x)dx =\dfrac{1}{4}+o(1), \text{ and }\,\|U_k\|^{2}=\dfrac{k^4}{4}+o(k^4).$$
Hence (\ref{etab}) holds.
\end{proof}
\bprop
\label{obsbe} Let $U_{1}=(u_1,v_1,\eta_1)^{T}$ be the solution of the conservative problem (\ref{consbe}) with initial datum $U_0\in \caD(\caA_c)$. Then there exists $ T>0$ and $c>0$ depending on $T$ such that\\
\be\label{obsbeam}\int_{0}^{T}|\eta_{1}(t)|^2dt\geq c\|U_0\|_{D(A^{-1})}^2.
\ee
\eprop

\begin{proof} We arrange the elements of $\sigma(\caA_c)$ in increasing order.\\
Let $J=\{i\mu: |\mu|<\mu_{k_0}\}$. Then $\sigma(\caA_c)=J\cup \{i\mu_k: |k|\ge k_0\}$
 and ${\displaystyle(U_{\mu})_{\mu\in J}} \cup {\displaystyle(U_{1,k})_{|k|\ge k_0}}$ forms a Hilbert basis of $\caH$. We may write 
 $$U_0=\displaystyle{\displaystyle{\sum_{\mu\in J}}u_0^{\mu}U_{\mu}+\sum_{|k|\ge k_0}}u_0^{(k)}U_{1,k}.$$ Moreover,
$$\eta_1(t)=\sum_{\mu\in J}u_0^{\mu}e^{i\mu t}\eta_{\mu}+\sum_{|k|\ge k_0}u_0^{(k)}e^{i\mu_kt}\eta_{1,k}.$$
Note that $\mu_{k+1}-\mu_k\geq\frac{\pi}{2}$ for $|k|\geq k_0$. Set $\gamma_0=\min\left\{\frac{\pi}{2},\min\{|\mu-\mu'|: \mu\in J,\mu'\in J\}\right\}$.
 As $|\mu-\mu'| \ge \gamma_{0}>0$ for all consecutive $\mu \in \sigma ( \caA_c ), {\mu'} \in \sigma (\caA_c)$. Then using Ingham's inequality there exists $T>{2\pi}{\gamma_0}>0$ and a constant $c$ depending on $T$ such that

$$\int_{0}^{T}|\eta_{1}(t)|^2dt\geq c\left(\sum_{\mu\in J}|u_{0}^{\mu}\eta_{\mu}|^2+\sum_{|k|\ge k_0 }|u_{0}^{(k)}\eta_{1,k}|^2\right).$$ 

Due to Lemma \ref{asmbe}, we have that $|\eta_{1,k}|^2\sim \dfrac{1}{k^4}$. we deduce using  Ingham's inequality the existence of  $T>0$ such that \begin{equation}\label{ob1be}\int_{0}^{T}|\eta_{1}|^2dt\gtrsim \sum_{\mu\in J} |u_0^{\mu}|^2|\mu|^{-2} +\sum_{|k|\ge k_0}\frac{|u_0^{(k)}|^2}{k^4}.\end{equation}
Therefore, we obtain (\ref{obsbeam}) as required.
\end{proof}

\begin{theorem}
Let $U_0\in \caD(\caA_d)$ and let $U$ be the solution  of the corresponding dissipative problem
$$U_t=\caA_d U, \,\, U(0)=U_0\in \caD (\caA_d).$$
Then $U$ satisfies,
\be\label{dpex}\|U(t)\|^2\lesssim \dfrac{1}{1+t}
\| U_0\|_{\caD(\caA_d)}^2.\ee
\end{theorem}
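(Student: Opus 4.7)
The strategy is to invoke Theorem \ref{Thpolynomial}(2) with $\theta = 1/2$ and the triple
$\caH_1 := \dad$, $\caH$ the energy space, and $\caH_2 := \caD(\caA_c^{-1})$, the last space being interpreted via the spectral decomposition of $\caA_c$ on the orthogonal complement of $\ker \caA_c$ (the finite-dimensional kernel/low-frequency piece coming from the set $J$ of Proposition \ref{obsbe} is handled by direct sum). Since $\widehat{C}=-\beta\,\mathrm{id}$ is bounded on $U=\C$, we have $D=\sqrt{\beta}\in\caL(\C)$, so the hypothesis ``$D\in\caL(U)$'' of Theorem \ref{Thpolynomial} is automatically satisfied, and $W=U=\C$.

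First I would verify that the observability furnished by Proposition \ref{obsbe} is exactly hypothesis (\ref{Obsv2}). With $W=\C$, $D^*u_1=\sqrt{\beta}\,\eta_1$, and the right-hand side $\|U_0\|_{\caD(A^{-1})}^2$ of Proposition \ref{obsbe} expands, by (\ref{ob1be}), in the spectral basis of Lemma \ref{asmbe} as $\sum_{\mu\in J}|u_0^\mu|^2/|\mu|^2+\sum_{|k|\ge k_0}|u_0^{(k)}|^2/k^4$. Using $\mu_k\sim k^2$ from Lemma \ref{asmbe}, this is equivalent to $\|\caA_c^{-1}U_0\|_\caH^2=\|U_0\|_{\caH_2}^2$.

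Second, the interpolation identity $[\caH_1;\caH_2]_{1/2}=\caH$ must be checked. Boundedness of $\caA_r$ on $\caH$ gives $\dad=\dac$ with equivalent graph norms, so it suffices to prove $[\dac;\caD(\caA_c^{-1})]_{1/2}=\caH$. In the Hilbert basis from the proof of Proposition \ref{obsbe}, the three norms become weighted $\ell^2$-norms with weights $\mu_k^2$, $1$ and $\mu_k^{-2}$, and the identity reduces to the elementary geometric-mean relation
\[
\sqrt{\mu_k^2\cdot\mu_k^{-2}}=1,
\]
which is the standard Hilbert-space interpolation statement (see \cite{Triebel}). With $\theta=1/2$ one has $\theta/(1-\theta)=1$, so Theorem \ref{Thpolynomial}(2) gives
\[
E(t)\;\lesssim\;\frac{1}{1+t}\,\|U_0\|_{\dad}^2,
\]
and (\ref{dpex}) follows from $\|U(t)\|^2=2E(t)$.

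The main obstacle I expect is handling rigorously the part of the spectrum where $\caA_c$ is not invertible: one must split the phase space as a direct sum of $\ker\caA_c$ (and the finite-dimensional $J$-eigenspaces) with its orthogonal complement, define $\caH_2$ so as to separately control each factor, and verify that both the observability of Proposition \ref{obsbe} and the interpolation identity survive this splitting. A minor secondary point is to note that the ``$D(A^{-1})$''-norm of Proposition \ref{obsbe} really encodes $\caA_c^{-1}$ (through the spectral bound $\mu_k\sim k^2$), since $A$ itself acts only on the first slot of $\caH$.
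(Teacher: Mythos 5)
Your proposal follows essentially the same route as the paper: the paper's proof likewise sets $\caH_1=\caD(\caA_c)$ and $\caH_2=\caD(\caA_c^{-1})$, uses $D\in\caL(U)$ so that Lemma \ref{estu2} applies, reads Proposition \ref{obsbe} as the observability hypothesis (\ref{Obsv2}), and concludes by Theorem \ref{Thpolynomial} with $\theta=1/2$. The extra care you take with the kernel of $\caA_c$ and with verifying the interpolation identity and the equivalence of the $\caD(A^{-1})$-type norm spectrally only fills in details the paper leaves implicit, so the argument is the same and correct.
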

\begin{proof}
Since the operator $D\in\caL(U)$, then Lemma \ref{estu2} holds true.\\

Set  ${\caH}_1=\caD(\caA_c)$ and ${\caH}_2=\caD(\caA_c^{-1}),$ obtained by means of the inner product in $X$. Then $\caH=[{\caH}_1; {\caH}_2]_{1/2}$.
By Proposition \ref{obsbe}, we have $$ \dint_0^T \|D^*u_1(s)\|_U^2ds \geq c_T \|u_0\|_{{\caH}_2}^2.$$ By Theorem \ref{Thpolynomial} applied for $\theta=1/2$, we therefore obtain  (\ref{dpex}).
\end{proof}

\subsection*{\underline{Example on uniform stability}}
Consider the following system,
\be \label{thermo3} 
\left \{ \begin{array}{lll}
u_{tt}(x,t)+  u^{(4)}(x,t)+ \alpha \theta_{xx}(x,t)=0, &&t\in [0, \infty ), 0<x<1 \\
\theta_t (x,t)+\beta\theta(x,t)- \alpha u_{txx}(x,t)=0,&& t\in [0, \infty ), 0<x<1  \\
u(0,t)=u(1,t)=u''(0,t)=u''(1,t)=0&&  t\in [0, \infty )\\
u(x,0)=u_0(x),u_t(x,0)=u_1(x), \theta(x,0)=\theta_0(x), &&   0<x<1 
\end{array} 
\right. 
\ee
with $\alpha>0, \beta>0$.
Define the following spaces,
$$V=H^2(0,1)\cap H^1_0(0,1),X=U=L^2(0,1),$$
and the following operators,
$$\caD(A)=\{u\in H^4(0,1)\cap H^1_0(0,1):u_{xx}(0)=u_{xx}(1)=0\},\, Au=u_{xxxx}\in  L^{2}(0,1),$$
\beqs \widehat{C}:&L^2(0,1)\to L^2(0,1)&\\
&\theta\to\beta\theta.&
\eeqs
Remark that $\widehat{C}$ is a bounded operator on $L^2(0,1)$.
Moreover, $B$ and $B^*$ are given by
\beqs B:&U\to V'&,\,\, B^*:V\to U\\
&\,\,\,\,\,\theta\to\alpha\theta_{xx}&\,\,\,\, \,\,\hspace{0.7cm} u\to\alpha u_{xx},
\eeqs
and $D,D^*\in\caL(U)$ with $D\theta=D^*\theta=\sqrt{\beta}\theta$.
The norm defined on the energy space  $\caH=V\times X\times U$ is given by
$$\|(u,v,\theta)^{\top}\|^2_{\caH}=\int_{0}^{1}|u_{xx}|^2dx+\int_{0}^{1}|v|^2dx+\int_{0}^{1}|\theta|^2dx$$
We moreover have $$\caD(\caA_d)=\caD(\caA_c)=\{(u,v,\theta)^\top\in V\times V\times U:u^{(4)}+\theta_{xx}\in L^2(\Omega)\}.$$

The associated conservative system is given by\be \label{thermo3c} 
\left \{ \begin{array}{lll}
u_{tt}(x,t)+  u^{(4)}(x,t)+ \alpha \theta_{xx}(x,t)=0, &&t\in [0, \infty ), 0<x<1 \\
\theta_t (x,t)- \alpha u_{txx}(x,t)=0,&& t\in [0, \infty ), 0<x<1  \\
u(0,t)=u(1,t)=u''(0,t)=u''(1,t)=0&&  t\in [0, \infty )\\
u(x,0)=u_0(x),u_t(x,0)=u_1(x), \theta(x,0)=\theta_0(x), &&   0<x<1. 
\end{array} 
\right. 
\ee
In the following proposition we prove that the solution $u,\theta$ of (\ref{thermo3c}) satisfies the required observability inequality (assumption (O)), which is enough to deduce the exponential stability of (\ref{thermo3}) as $D\in\caL(U)$.
  \bprop \label{ObsUni} Let $U_0= (u_0,u_1,\theta_0)^{\top}\in\caH$. Then 
 the solution $(u,\theta)$ of (\ref{thermo3c}) satisfies
 \be\label{obsexp}\int_{0}^{T}|\theta(t)|^2dt\gtrsim \|U_0\|_{\caH}^2.
\ee  
  \eprop
  \begin{proof} Writing $(u_0,u_1,\theta_0)^{\top}\in\caD(\caA_c)$ with respect to the basis  ${\displaystyle(\sin(k\pi x))_{k\in\N^*}}$ of $L^2(0,1)$, we  have
$$u_0=\sum_{k\in\N^*}u_k^0\sin(k\pi x), u_1=\sum_{k\in\N^*}u_k^1\sin(k\pi x), \theta_0=\sum_{k\in\N^*}\theta_k^0\sin(k\pi x).$$
	The solution $(u,\theta)$ of (\ref{thermo3c}) is thus given by  $$u(t)=\displaystyle{\sum_{k\in\N^*}}u_k(t)\sin(k\pi x)\,\, \hbox{ and } \,\, \theta(t)=\displaystyle{\sum_{k\in\N^*}}\theta_k(t)\sin(k\pi x).$$ By the second equation of (\ref{thermo3c}), 
	$$\theta_k'(t)+\alpha k^2\pi^2u_k'(t)=0,\forall k\in\N^*.$$
	Due to  the initial conditions we deduce that 
	$$\theta_k(t)=-\alpha k^2\pi^2u_k(t)+\theta_k^0+\alpha k^2\pi^2u_k^0.$$
	Replacing $u$ and $\theta$ in the first equation of (\ref{thermo3c}), we deduce that 
	$$u_k''(t)+ k^4\pi^4(1+\alpha^2) u_k(t)=\alpha k^2\pi^2(\theta_k^0+\alpha k^2\pi^2u_k^0),\forall k\in\N^*,$$
hence 
$$u_k(t)=\frac{\alpha(\theta_k^0+\alpha k^2\pi^2u_k^0)}{k^2\pi^2(1+\alpha^2)}+c_1\cos(k^2\pi^2\sqrt{1+\alpha^2}t)+c_2\sin(k^2\pi^2\sqrt{1+\alpha^2}t),$$
where 
$$c_1=\frac{-\alpha\theta_k^0+k^2\pi^2u_k^0}{k^2\pi^2(1+\alpha^2)}, c_2=\frac{u_k^1}{k^2\pi^2\sqrt{1+\alpha^2}},$$
 obtained by the initial conditions $u_k(0)=u_k^0, u_k'(0)=u_k^1$ and $\theta_k(0)=\theta_k^0$.\\
 
Finally,
\begin{eqnarray*}
\theta_k(t)=\frac{1}{(1+\alpha^2)^{\frac{3}{2}}}[\sqrt{1+\alpha^2}(\theta_k^0+\alpha k^2\pi^2u_k^0)+
\alpha\sqrt{1+\alpha^2}(\alpha\theta_k^0- k^2\pi^2u_k^0)\cos(\sqrt{1+\alpha^2}k^2\pi^2 t)\\- \alpha(1+\alpha^2)u_k^1\sin(\sqrt{1+\alpha^2}k^2\pi^2 t)].
\end{eqnarray*}
Set $T=\dfrac{2}{\sqrt{1+\alpha^2}\pi}$. Then,
\begin{eqnarray*}
|\theta_k(t)|^2&=&\frac{1}{(1+\alpha^2)^{\frac{5}{2}}\pi}\left[(2+\alpha^4)(\theta_k^0)^2-2\alpha(-2+\alpha^2)k^2\pi^2\theta_k^0u_k^0+\alpha^2(3k^4\pi^4(u_k^0)^2+(1+\alpha^2)(u_k^1)^2)\right]\\
&=&\frac{\alpha^2(1+\alpha^2)(u_k^1)^2}{(1+\alpha^2)^{\frac{5}{2}}\pi}+\begin{pmatrix}
k^2u_k^0 & \theta_k^0
\end{pmatrix}M \begin{pmatrix}
k^2u_k^0 \\ \theta_k^0
\end{pmatrix},
\end{eqnarray*}
where $M$ is a square matrix given by 
	$$\begin{pmatrix}
\frac{3\alpha^2\pi^3}{(1+\alpha^2)^{\frac{5}{2}}}&-\frac{\alpha(-2+\alpha^2)\pi}{(1+\alpha^2)^{\frac{5}{2}}} \\
-\frac{\alpha(-2+\alpha^2)\pi}{(1+\alpha^2)^{\frac{5}{2}}} & \frac{2+\alpha^4}{\pi(1+\alpha^2)^{\frac{5}{2}}}\\
\end{pmatrix}.$$
But as $$\det M= \frac{2\alpha^2\pi^2}{(1+\alpha^2)^3}>0,
\text{trace}\,M=\frac{2+\alpha^4+3\alpha^2\pi^4}{\pi(1+\alpha^2)^{\frac{5}{2}}}>0,$$ we deduce that $\lb_{\text{min}}\geq c > 0$ (where $\lambda_{min}$ is the smallest eigenvalue of $M$) for some constant $c$ independent of $k$ and hence 
$$\int_{0}^{T}|\theta_k(t)|^2dt\ge T\left(\frac{\alpha^2(1+\alpha^2)(u_k^1)^2}{(1+\alpha^2)^{\frac{5}{2}}\pi}+ \lb_{\text{min}}(M)(k^4(u_k^0)^2+(\theta_k^0)^2)\right)$$
we get \begin{equation*}
\int_{0}^{T}|\theta(t)|^2dt\gtrsim \sum_{k\in\N^*}(k^4(u_k^0)^2+(\theta_k^0)^2+(u_k^1)^2)\gtrsim \|U_0\|_{\caH}^2.
\end{equation*}
We hence conclude (\ref{obsexp}) by denseness of $\caD(\caA_c)$ in $\caH$.
\end{proof}
 Recall that the energy of $(u,\theta)$  a solution of (\ref{thermo3c}) is defined by
$$E(t)=\frac 12\left(\int_{0}^{1}|u_{xx}|^2dx+\int_{0}^{1}|u_t|^2dx+\int_{0}^{1}|\theta|^2dx\right).$$
 \bthe
 Let $U_0\in\caH$. Then there exists $\omega>0$ such that the energy of the solution $(u,\theta)$ of (\ref{thermo3c}) satisfies \be \label{expexp}
E(t) \lesssim  e^{-\omega t} \, E(0), \forall t \in [0,+\infty).
\ee
 \ethe
 \begin{proof} 
By Proposition \ref{ObsUni}, assumption (O) holds true. Then (\ref{expexp}) follows by applying Theorem \ref{THexp}. 
 \end{proof}
\subsection*{\underline{Hybrid example-2D problem}}

Let $\Omega$ be a bounded domain of $\R^2$ whose boundary $\Gamma$ satisfies
$$\Gamma={\Gamma}_0\cup {\Gamma}_1
,\, {\bar{\Gamma}}_0\cap {{\bar{\Gamma}}}_1=\phi,
\, \hbox{ and }  \hbox{ meas }\Gamma_0\neq 0.$$
We assume moreover that there exists a point $x_0\in\R^2$ such that
$$\Gamma_0=\{x\in\Gamma: m(x).\nu\le 0\},\,\,\Gamma_1=\{x\in\Gamma: m(x).\nu\ge \omega >0\},$$
for some constant $\omega>0$,
where $m(x)=x-x_0$  and 
 $\nu=\nu(x)$ denotes the unit outward normal vector at $x\in \Gamma$. Denote by $R=\|m\|_\infty=
\displaystyle{\sup_{x\in \Omega}}\|m(x)\|$.\\
Consider the following system,
$$
\leqno(P_b)\hspace{4cm}
\left \{
\begin{array}{ll}
y_{tt}(x,t)-\Delta y(x,t)=0,&\, x\in\Omega,t>0,\\
y(x,t)=0,&\, x\in\Gamma_0, t>0,\\
a y_{tt}(x,t)+\p{\nu} y(x,t)+ \eta(x,t)=0,&\,x\in\Gamma_1, t>0,\\
\eta _t(x,t)-y_t(x,t)+b\eta (x,t)=0,&\, x\in\Gamma_1, t>0,\\
y(x,0)=y_0(x),\, y_t(x,0)=y_1(x), &  \,x\in\Omega,\\
\eta(x,0)=\eta_0(x)&\,x\in\Gamma_1,\\
\end{array}
\right.\\
$$
where $a$ and $b$ are two positive constants.
In order to justify that the system could be written in the proposed general form, we
introduce a proper functional setting. 
Let $$X=L^2(\Omega) \times L^2(\Gamma_1)$$
endowed with the inner product,
$$\left((y,\xi)^{\top},(\tilde{y},\tilde{\xi})^{\top}\right)_X =\inta <y,\tilde{y}>dx+\dfrac{1}{a} \intbn  <\xi, \tilde{\xi}> ds$$
and $$W=\{y\in H^{1}(\Omega):y=0 \text{ on } \Gamma_0\}=H_{\Gamma_0}^1(\Omega),\, \, U=L^2(\Gamma_1).$$
Define also $V$ by $$V=\{(y,\xi)\in W\times L^2(\Gamma_1): ay=\xi \text{ on } \Gamma_1\},$$ and the operator $(A,\caD(A))$ by 
$$A(y,\xi)^{\top}=(-\Delta y,\partial_{\nu}y|_{\Gamma_1})^{\top}$$
with
$$\caD(A)=\{x=(y,\xi)^\top\in V: y\in H^2(\Omega)\}.$$
 We can easily check using Lax-Milgram lemma that $(A\pm iI)$ are surjective. In addition, since $A$ is symmetric  we deduce that $A$ is  self-adjoint. The corresponding form $\tilde{a}$  is  given by 
$$\tilde{a}(u,\tilde{u})=\inta <y_x,\tilde{y}_x>dx,\, u=(y,\xi)^\top\in V,\tilde{u}=(\tilde{y},\tilde{\xi})^\top \in V.$$
In addition, we define for every $\eta\in U$ and $(y,\xi)^{\top}\in V$ the operators $B$ and $B^*$ by 
$$B\eta=(0,\eta)^\top, \, B^*(y,\xi)^{\top}=y|_{\Gamma_1}.$$
The operator $C=0$ and the operator $\widehat{C}$ is given by
$$\widehat{C}\eta=-b\eta, \, \eta\in L^2(\Gamma_1).$$
Hence the system $(P_b)$ can be written in the form of system (\ref{Gs}).

Accordingly, we define the energy space 
$$\caH=V \times L^2(\Omega) \times L^2(\Gamma_1)^2,$$ endowed with the inner product 
$$(u,\tilde{u})_\caH =\inta <y_x,\tilde{y_x}> dx + \inta <z,\tilde{z}> dx +\dfrac{1}{a} \intbn  <\xi, \tilde{\xi}> ds+ \intbn <\eta ,\tilde{\eta}>ds,$$
where $u=(y,\zeta,z,\xi,\eta),\,\tilde{u}=(\tilde{y},\tilde{\zeta},\tilde{z},\tilde{\xi},\tilde{\eta})\in \caH,$ and $<.,.>$ represents the Hermitian product in $\C$. The associated norm will be denoted by $\Vert \cdot\Vert_{\caH}$. Moreover,  $(\caA_d,\caD(\caA_d))$ is then given by
$$\caA_d u =(z,\xi, \Delta y, -\p \nu y-\eta,z|_{\Gamma_1}-b\eta),\, \forall u=(y,\zeta,z,\xi,\eta) \in {\mathcal D}({\mathcal A_d}),
$$ with
$$
\caD(\caA_d)=\{u=(y,\zeta,z,\xi,\eta) \in \caH : y\in H^2(0,1), z\in W, \zeta=a y|_{\Gamma_1} \xi= a z|_{\Gamma_1}\}.$$

Hence, the previous problem $(P_b)$ is formally equivalent to
\be \label{Pb}
\hspace{4cm}
 u_t=\caA_d u, \,\,\, u(0)=u_0,
\ee
where $u_0=(y_0,a y_0|_{\Gamma_1}, y_1, a y_1|_{\Gamma_1},\eta_0)$. 
The energy of the system $(P_b)$ is given by 
$$E(t)=\frac 12\left(\inta |y_t|^2 dx+ \inta |\g y|^2 dx+ \frac1a \intbn |y_t|^2 ds+\intbn |\eta ^2| ds\right),$$ 
and its derivative
$$\frac{d}{dt}E(t)=-b\intbn |\eta ^2| ds.$$

The corresponding conservative system is defined by
$$
\leqno(P_0)\hspace{4cm}
\left \{
\begin{array}{ll}
y_{tt}(x,t)-\Delta y(x,t)=0,&\, x\in\Omega,t>0,\\
y(x,t)=0,&\, x\in\Gamma_0, t>0,\\
a y_{tt}(x,t)+\p{\nu} y(x,t)+ \eta(x,t)=0,&\,x\in\Gamma_1, t>0,\\
\eta _t(x,t)-y_t(x,t)=0,&\, x\in\Gamma_1, t>0,\\
y(x,0)=y_0(x),\, y_t(x,0)=y_1(x), &  \,x\in\Omega,\\
\eta(x,0)=\eta_0(x)&\,x\in\Gamma_1.\\
\end{array}
\right.\\
$$
 The initial value problem associated to the conservative system $(P_0)$ is given by 
\be \label{P0}
\hspace{4cm}
 u_t=\caA_c u, \,\,\, u(0)=u_0,
\ee
where 
$$ \caA_c u =(z,\xi, \Delta y, -\p \nu y-\eta,z|_{\Gamma_1}),\, \forall u=(y,\zeta,z,\xi,\eta) \in {\mathcal D}({\mathcal A_c}),\,\,\caD(\caA_c)=\caD(\caA_d) .
$$
As the operators $D$ and $D^*$ given by,
$$D\eta =D^*\eta=\sqrt{b} \eta,\, \eta\in L^2(\Gamma_1),$$
are bounded, Lemma \ref{estu2} holds true. Thus in order to deduce the polynomial stability of the solution of (\ref{Pb}), it is sufficient to check that the solution $u_1$ of (\ref{P0}) satisfies the observability inequality $(O)$,
 $$b\inttbn  |\eta_1 ^2|\gtrsim \|u_0\|_{\caD(\caA_c^{-2})}^2,$$ 
 where $\caD(\caA_c^{-2})$ denotes throughout the example the space $(\caD(\caA_c^{2}))^\prime$.\\
 We first state the following proposition.
\blem\label{hb}
Let $u_0=(y_0,\zeta_0,z_0,\xi_0,\eta_0)^{\top}\in \caH$ and let $u_1=(y_1,\zeta_1,z_1,\xi_1,\eta_1)^{\top}$ be the corresponding solution of the problem $(\ref{P0})$. Then there exists $c_T>0$ such that 
\be\label{obser}\inttbn  |\eta_1 ^2|\ge c_T \|u_0\|_{\caD(\caA_c^{-2})}^2.\ee 
\elem
 \begin{proof}\textbf{First step.} Let $v_0\in \caD(\caA_c)$ and $v=(y,\zeta,z,\xi,\eta)^{\top}$ be a solution of \be\label{Pv}v_t=\caA_c v,\,\, v(0)=v_0.\ee  Then there exists $T>0$ such that
\be\label{2ineq1} 
\|v_0\|_{\caH}^2 \leq   C_1 \inttbn |y_t|^2 +
C_2 \inttbn |\p{\nu} y|^2+C_3  \inttbn |\eta |^2,
\ee
for some positive constants $C_1, C_2, C_3$.\\


Indeed, for $v_0\in \caD(\caA_c)$, we have
\begin{eqnarray}\label{o1}
\intta y_{tt} (2  m\cdot\g y)&=&-\intta y_{t}  (2  m\cdot\g y_t)+\intabt{y_t 2 m \cdot\g y}\\
&=&2\intta |y_t|^2 -\inttb (m\cdot\nu) |y_t|^2+\intabt{y_t 2 m\cdot \g y},\nonumber\end{eqnarray}
and
\begin{eqnarray}\label{o2}
\intta \Delta y (2 m \cdot\g y) &=&- \intta \g y\cdot  \g (2 m\cdot \g y) + \inttb \p{\nu} y (2 m\cdot \g y)\\
&=&-\inttb (m\cdot\nu) |\g y|^2+ \inttb \p{\nu} y (2 m\cdot \g y).\nonumber\end{eqnarray}

Finally, multiplying the wave equation by $2  m\cdot\g y$ and  substracting (\ref{o2}) from (\ref{o1}) leads to,
\begin{eqnarray}\label{r1} 2\intta |y_t|^2-\inttbn (m\cdot\nu) |y_t|^2+ \inttb (m\cdot\nu) |\g y|^2&&\\
+\intabt{ y_t 2 m \cdot\g y}-\inttb \p{\nu} y (2 m\cdot \g y)=0.\nonumber
\end{eqnarray}
Multiplying the wave equation equation by $y$ we obtain 
\be \label{r2} -\intta |y_t|^2+  \intta |\g y|^2+ \intabt{y_t y}-\inttb (\nu\cdot\g y) y=0.\ee
As
$$\inttb \p{\nu} y (2 m\cdot \g y)=2 \inttb (m.\nu)( \p{\nu} y)^2+2 \inttb (m.\tau)(\p{\nu} y \p{\tau} y),
$$ 
then taking into consideration the Dirichlet condition on $\Gamma_0$, we get 
\begin{eqnarray*}
\inttb \p{\nu} y (2 m\cdot \g y)- \inttb (m\cdot\nu) |\g y|^2
&=& \inttb (m.\nu)( \p{\nu} y)^2-
 \inttbn (m.\nu)( \p{\tau} y)^2\\
&&+2 \inttbn (m.\tau)( \p{\nu} y \p{\tau} y).
\end{eqnarray*}
Due to the geometric conditions imposed on $\Gamma$, we have
\begin{eqnarray} \label{r3} \inttb \p{\nu} y (2 m\cdot \g y)- \inttb (m\cdot\nu) |\g y|^2&\leq& \inttbn (m.\nu)( \p{\nu} y)^2+\frac{R^2}{\omega} \inttbn ( \p{\nu} y)^2\\
&\leq& (R+\frac{R^2}{\omega}) \inttbn ( \p{\nu} y)^2.\nonumber\end{eqnarray}

Hence (\ref{r1}) leads to 

\be\label{r6} 2\intta |y_t|^2+\intabt{ y_t 2 m \cdot\g y}\leq \inttbn (m\cdot\nu) |y_t|^2+(R+\frac{R^2}{\omega}) \inttbn ( \p{\nu} y)^2. \ee
Adding  (\ref{r2}) to (\ref{r6}), we obtain

\begin{eqnarray}\label{r7}
\intta |y_t|^2+ \intta |\g y|^2+\intabt{ y_t 2 m \cdot\g y} + \intabt{y_t y}-\inttb (\nu\cdot\g y) y\\
\leq \inttbn (m\cdot\nu) |y_t|^2+(R+\frac{R^2}{\omega}) \inttbn ( \p{\nu} y)^2.\nonumber
\end{eqnarray}

Note moreover that $\intabt{ y_t 2 m \cdot\g y}+ \intabt{y_t y} \gtrsim - E(0)$, and 
$$\inttbn \p{\nu} y y\leq \dfrac{1}{2\eps } \inttbn (\p{\nu} y)^2+  \dfrac{\eps}{2} \inttbn  y^2\leq \dfrac{1}{2\eps } \inttbn (\p{\nu} y)^2+\dfrac{c_p \eps}{2} \intta |\g y|^2. $$
We deduce that  for $\epsilon>0$ chosen small enough there exists $C>0$ such that 
\begin{eqnarray}\label{r5}(T-C) \|v_0\|_{\caH}^2 -\frac1a \inttbn{|y_t|^2}-\inttbn{|\eta|^2}&\leq& \inttbn (m\cdot\nu) |y_t|^2+ \dfrac{1}{2\eps } \inttbn (\p{\nu} y)^2\\
&&+(R+\frac{R^2}{\omega}) \inttbn ( \p{\nu} y)^2.\nonumber\end{eqnarray}
Finally, choosing $T$  large enough, we get the required result (\ref{2ineq1}).\\

\textbf{Second step.} Let $\alpha>0$ and  set 
 $$\eta_1=\dfrac{1}{a}(-\p \nu y-\eta)+ 2 \alpha z + \alpha ^2 \eta$$

We have the following expression for $|\eta_1|^2$ on $\Gamma_1$,
$$|\eta_1|^2 =\dfrac{1}{a^2}|\p{\nu} y|^2+ 4 \alpha^2|z|^2+\dfrac{(a \alpha^2-1)^2}{a^2}|\eta| ^2 -\dfrac{4 \alpha }{a} \p{\nu} y z + \dfrac{2}{a^2} (1-a \alpha^2) \p{\nu} y \eta+ \dfrac{4\alpha}{ a}(a \alpha^2-1) z \eta.$$ 
 
By the boundary condition on $\Gamma_1$, $\eta_t= z$ and $\p \nu y=-\eta -a \eta_{tt}$,
we get 

$$|\eta_1|^2 =\dfrac{1}{a^2} |\p \nu y|^2+  4 \alpha^2|z|^2+\dfrac{(-1+a \alpha ^2) (1+a \alpha ^2)}{a^2}|\eta| ^2 +4 \alpha^3 \eta \eta_t  +  4 \alpha \eta_t \eta_{tt} + \dfrac{2 \left(-1+a \alpha^2\right) }{a}\eta \eta_{tt}.$$

Thus 

\begin{eqnarray}\label{r17}
\dint_0^T  \intbn |\eta_1|^2ds&=& \dint_0^T  \intbn [ \dfrac{1}{a^2} |\p \nu y|^2+( 4 \alpha^2-\dfrac{2(-1+a \al^2)}{a})|z(1)|^2+\dfrac{(-1+a \alpha ^2)(1+a \alpha ^2)}{a^2}|\eta| ^2]ds \nonumber\\
&&+2\al ^3\intbn(\eta(T)^2-\eta(0)^2)ds+2 \al \intbn( \eta_t(T)^2-\eta_t(0)^2)ds + \\
&&\dfrac{2(-1+a \al^2)}{a} \intbn \eta(T)\eta_t(T)ds -\dfrac{2(-1+a \al^2)}{a} \intbn\eta(0)\eta_t(0))ds.\nonumber 
\end{eqnarray}

Choosing $\al$ large enough, we get $$w_1=\dfrac{1}{a^2}>0,\;  w_{2}=4\al^2-  \dfrac{2(-1+a \al^2)}{a}=\frac{2 (1+a \al ^2)}{a}>0, w_{3}=\dfrac{(-1+a \al^2) (1+a \al^2)}{a^2}>0.$$ 
In addition, (\ref{r17}) implies that
$$\begin{array}{ll}
\dint_0^T \intbn |\eta_1|^2 ds\geq &\dint_0^T \intbn [ w_1 |\p \nu y|^2+ w_{2}|z(1)|^2+w_{3}|\eta| ^2] ds -K_{a,\alpha} \|v_0\|_{\caH}^2, 
\end{array}
$$
for some constant $K_{a,\alpha}\geq 0 $  independent of $T$.

Combining the previous inequality with (\ref{2ineq1}), we deduce the existence of $c_1>0$ 
such that 

$$\dint_0^T \intbn |\eta_1|^2ds\geq  
c_1 (T-2) \|v_0\|_{\caH}^2 -K_{a,\alpha} \|v_0\|_{\caH}^2.$$
Finally, choosing $T$ large enough, we obtain
\be\label{observineq} \dint_0^T \intbn |\eta_1|^2 ds=
\dint_0^T \intbn \left| \dfrac{1}{a}(-\p \nu y-\eta)+ 2 \alpha z(1) + \alpha ^2 \eta \right|^2 ds \geq  
c_2 \|v_0\|_{\caH}^2 ,\ee
for some positve constant $c_2$ depending on $T$.

\textbf{Last  step.}
Let $u_0\in D(\caA_d)$ and let $u_1=(y_1,\zeta_1,z_1,\xi_1,\eta_1)^{\top}$ be the corresponding solution of (\ref{P0}),  then $$v=(y,\zeta,z,\xi,\eta)^{\top}= [(\caA_c +\alpha I) ^2]^{-1} u,$$ is a solution of (\ref{Pv}) where $v_0= [(\caA_c +\alpha I) ^2]^{-1} u_0 \in \caD(\caA_c)$. Since $(\caA_c +\alpha I) ^2=\caA_c^2 + 2 \alpha  \caA_c +\al ^2 I,$  
the last component $\eta_1$ of $u_1$ 
is given by $$\eta_1=\dfrac{1}{a}(-\p \nu y-\eta)+ 2 \alpha z(1) + \alpha ^2 \eta,$$ thus by the two previous steps we get (\ref{observineq}). Noting that  $\|u_0\|_{\caD(\caA_c)}\sim\|v_0\|_{\caH}$,
we consequently deduce that (\ref{obser}) holds for all $u_0\in \caD(\caA_c)$.
\end{proof}
\begin{theorem}
Let $u_0\in \caD(\caA_d)$ and let $u$ be the solution  of (\ref{Pb}). Then $u$ satisfies,
\be\label{dpexb2}\|u(t)\|^2\lesssim \dfrac{1}{(1+t)^\frac{1}{2}}
\| u_0\|_{\caD(\caA_d)}^2.\ee
\end{theorem}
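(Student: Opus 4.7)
The plan is to apply the second assertion of Theorem \ref{Thpolynomial}, combined with the observability estimate of Lemma \ref{hb}. Since the feedback operator $D\eta = \sqrt{b}\,\eta$ defines a bounded operator on $U = L^2(\Gamma_1)$, we have $D\in\caL(U)$, so assumption $\mathbf{(H)}$ is not required and Lemma \ref{estu2} applies directly to the dissipative system $(P_b)$.

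Next I would identify the spaces $\caH_1$ and $\caH_2$ required in Theorem \ref{Thpolynomial}. Take $\caH_1 = \caD(\caA_c)$ endowed with the graph norm, and $\caH_2 = \caD(\caA_c^{-2}) = (\caD(\caA_c^{2}))'$, which is precisely the norm appearing in the right-hand side of (\ref{obser}). The norm $\|\cdot\|_{\caH_1}$ is equivalent to $\|\cdot\|_{\caD(\caA_d)}$ because $\caD(\caA_c) = \caD(\caA_d)$ and $\caA_d - \caA_c = (0,0,-DD^*\,\cdot\,)$ is bounded on $\caH$. For the interpolation identity (\ref{interp}), I would use the fact that $\caA_c$ is skew-adjoint with compact resolvent, so $|\caA_c| = (-\caA_c^2)^{1/2}$ is a non-negative self-adjoint operator with compact resolvent on $\caH$, and the classical spectral interpolation formula
\[
\bigl[\caD(|\caA_c|^{\alpha});\,\caD(|\caA_c|^{\beta})\bigr]_{\theta} = \caD\bigl(|\caA_c|^{(1-\theta)\alpha + \theta\beta}\bigr)
\]
holds. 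Choosing $\alpha = 1$, $\beta = -2$, $\theta = \tfrac{1}{3}$ gives $(1-\theta)\alpha + \theta\beta = 0$, hence $[\caH_1;\,\caH_2]_{1/3} = \caH$.

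With this choice, $\theta/(1-\theta) = 1/2$, which matches the decay rate in (\ref{dpexb2}). By Lemma \ref{hb} we have
\[
\int_0^T \|D^* u_1(s)\|_U^2\,ds = b\inttbn |\eta_1|^2 \gtrsim \|u_0\|_{\caH_2}^2,
\]
which is exactly (\ref{Obsv2}). Applying item 2 of Theorem \ref{Thpolynomial} then yields
\[
\|u(t)\|_{\caH}^2 = 2E(t) \lesssim \frac{1}{(1+t)^{1/2}}\,\|u_0\|_{\caD(\caA_d)}^2,
\]
which is the claimed estimate (\ref{dpexb2}).

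The main obstacle is the verification of the interpolation identity $[\caD(\caA_c);\,\caD(\caA_c^{-2})]_{1/3} = \caH$: since $\caA_c$ is only skew-adjoint rather than self-adjoint, one must pass through the positive self-adjoint operator $|\caA_c|$ and identify $\caD(\caA_c^{k})$ with $\caD(|\caA_c|^{k})$ for integer $k$ (both as sets and topologically), before invoking the classical interpolation theorem for powers of positive self-adjoint operators. Everything else is a direct application of results already established in the paper, and no further delicate estimates are needed.
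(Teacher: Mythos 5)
Your proposal is correct and follows essentially the same route as the paper: take ${\caH}_1=\caD(\caA_c)$, ${\caH}_2=\caD(\caA_c^{-2})$, use $D\in\caL(U)$ so that Lemma \ref{estu2} applies, invoke the observability estimate of Lemma \ref{hb} as hypothesis (\ref{Obsv2}), and apply the second assertion of Theorem \ref{Thpolynomial} with $\theta=\tfrac13$, giving the rate $\theta/(1-\theta)=\tfrac12$. Your additional justification of the interpolation identity $[\caD(\caA_c);\caD(\caA_c^{-2})]_{1/3}=\caH$ via the positive self-adjoint operator $|\caA_c|$ merely fills in a step the paper states without proof.
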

\begin{proof}
Since the operator $D\in\caL(U)$, Lemma \ref{estu2} holds true.\\

Set  ${\caH}_1=D(\caA_c)$ and ${\caH}_2=D(\caA_c^{-2})$. Then $\caH=[{\caH}_1; {\caH}_2]_{1/3}$.
By Lemma \ref{hb}, we have $$ \dint_0^T \|D^*u_1(s)\|_U^2ds \geq c_T \|u_0\|_{{\caH}_2}^2.$$ By Theorem \ref{Thpolynomial} applied for $\theta=1/3$, we therefore obtain  (\ref{dpexb2}).
\end{proof}
\brk
Using the same method we get an analogous result for the one dimensional problem.  we can also get the observability inequality by a spectrum analysis and that was already done 
in the paper \cite{Mercier:11}, where the authors obtained an optimal decay, thus we expect the decay in the two dimensional case to be optimal as well.
\erk
\brk
Consider the following system studied in \cite{abbas:13}
\begin{eqnarray}\label{zb13}
\left\{\begin{array}{lcl}
y_{tt}(x,t)-y_{xx}(x,t)&=&0,\quad 0<x<1,t>0,\\
y(0,t)&=&0,\quad t>0,\\
y_{x}(1,t)+(\eta(t),C_0)_{\mathbb C^n}&=&0,\quad t>0,\\
\eta_{t}(t)-B_0\eta(t)-C_0y_{t}(1,t)&=&0,\quad t>0,
\end{array}\right.
\end{eqnarray} 
and 
$$y(x,0)=y_0(x),y_{t}(x,0)=y_1(x), \eta(0)=\eta_{0}, 0<x<1,
$$
where $B_0\in M_n(\C)$, $C_0\in\C^n$ are given. System (\ref{zb13})
can be written in the form (\ref{Gs0}) where $V=\{y\in H^{1}(0,1):y(0)=0\},$ $X=L^2(0,1)$ and $U=\C^n$. In this case, $\widehat{C}=B_0$  is a bounded operator and $B\eta=(\eta,C_0)\delta_1$ for all $\eta\in\C^n$. Indeed, since $\widehat{C}$ is bounded then it is enough to verify assumption (O). Assumption (O) was verified in \cite{abbas:13} and the polynomial stability of (\ref{zb13}) was deduced. In particular, for $n=1$ we obtain the system studied in \cite{wehbe:03}, where a polynomial decay is proved using a mutltiplier method. The polynomial decay can be also obtained by proving an observability inequality for the solutions of the corresponding conservative system which is exactly what has been verified in \cite{abbas:13}, thus applying the appraoch intoduced in this paper. 
\erk
\section{Unbounded example}
Consider  the following system
\be \label{unbd} 
\left \{ \begin{array}{lll}
u_{tt}(x,t)-  u_{xx}(x,t)+ w(x,t)=0, &&t\in [0, \infty ),\, 0<x<1\\
w_t(x,t)-i w_{xx}(x,t)+w(\xi,t)\delta_{\xi}-u_t(x,t) =0,&& t\in [0, \infty ),\, 0<x<1 \\
u(0,t)=u(1,t)=w(0,t)=w(1,t)=0, \, t\in [0, \infty ),\\
u(x,0) = u_0 (x), \, \partial_t u (x,0) = u_1 (x), \, w(x,0) = w_0, \, 0 < x < 1,
\end{array} 
\right. 
\ee
where $\xi\in(0,1)$.
Define the following spaces and operators:
$$X=U=L^2(0,1), V=H^1_0(0,1), U=L^2(0,1),W=\C,$$
$$A:u\in D(A)\to -u_{xx}\in L^2(0,1),\, D(A)=H^2(0,1)\cap H^1_0(0,1),$$
and $B=B^*=I_{U}=I_{L^2(0,1)}$.  In addition,
$$D:\eta\in\C\to\eta \delta_{\xi} \in (D(C))',\,\,D^*:w\in D(C)\to w(\xi)\in\C,$$ and 
$$C:w\in D(C)\to -i w_{xx}\in L^2(0,1),\, D(C)=H^2(0,1)\cap H^1_0(0,1).$$
The operator $\widehat{C}$ is thus given by 
$$\widehat{C}w=i w_{xx}-w(\xi)\delta_{\xi}$$
with
$$D(\widehat{C})=\{w\in H^1_0(0,1)\cap [H^2(0,\xi)\cap H^2(\xi,1)]: i[w_x]_{\xi}=i[w_x(\xi^{+})-w_x(\xi^{-})]=w(\xi)\}.$$
As the operator $D$ is unbounded, we need to verify that the problem satisfies assumption $(H)$ as well as  the asumption (O) for conservative problem.
In this case we have 
$${\cal D}(\caA_d)=D(A)\times V \times D(\widehat{C})$$ and 
$${\cal D}(\caA_c)=D(A)\times V \times D(C).$$
In order to verify the assumption $(H)$, we proceed by  finding the transfer function, for this purpose we recall that $u_2=u-u_1$ and $w_2=w-w_1$ satisfies (\ref{Ds1}) which is in this case
\be 
\label{unbd2} 
\left \{ \begin{array}{lll}
\p{tt} u_2- \p{xx}  u_2(x,t)+ w_2(x,t)=0, &&t\in [0, \infty ),\, 0<x<1\\
\p t w_2(x,t)-i \p{xx}  w_2(x,t) - \partial_t u_2=w(\xi,t)\delta_{\xi},&& t\in [0, \infty ),\, 0<x<1 \\
u_2(0,t)=u_2(1,t)=w_2(0,t)=w_2(1,t)=0, \\
u_2 (x,0) = 0, \, \partial_t u_2 (x,0) = 0, \, w_2 (x,0) = 0, \, 0 < x < 1,
\end{array} 
\right. 
\ee
and
\be 
\label{unbd2bis} 
\left \{ \begin{array}{lll}
\p{tt} u_1 - \p{xx}  u_1(x,t)+ w_1(x,t)=0, &&t\in [0, \infty ),\, 0<x<1\\
\p t w_1(x,t)-i \p{xx}  w_1(x,t) - \partial_t u_1 = 0,&& t\in [0, \infty ),\, 0<x<1 \\
u_1(0,t)=u_1(1,t)=w_1(0,t)=w_1(1,t)=0, \, t\in [0, \infty ),\\
u_1 (x,0) = u_{1,0}, \, \partial_t u_1 (x,0) = u_{1,0}, \, w_2 (x,0) = w_{2,0}, \, 0 < x <1,
\end{array} 
\right. 
\ee
Verifying the  assumption $(H)$ is equivelant to verifying (see \cite[Proposition 3.2]{ammari:01} for more details)
$$|{w}_{2}(\xi,t)|^2 \lesssim |{w}(\xi,t)|^2.$$ 
For this purpose, we state the following proposition.
\bprop\label{Transunbd}
Let $(u_2,w_2)=(u-u_1,w-w_1)$ be the solution of (\ref{unbdl}). Then $w_2$ verifies $$|{w}_{2}(\xi,t)|^2 \le |{w}(\xi,t)|^2.$$
\eprop
\begin{proof}
Let $\lambda =1+i\eta$ and consider $\hat{u}_2, \hat{w}_2$ the Laplace transforms of  $u_2$ and $w_2$  respectively. Then  $\hat{u}_{2}$ and $\hat{w}_{2}$ satisfies (\ref{Ls}) given by 
\be \label{unbdl}  
\left \{ \begin{array}{lll}
\lambda^2\hat{u}_2(x,\lambda)- \p{xx}\hat{u}_{2}(x,\lambda)+ \hat{w}_2(x,\lambda)=0,\\
\lambda
\hat{w}_{2}(x,\lambda)-i \p{xx}\hat{w}_{2}(x,\lambda)-\lambda \hat{u}_{2}=\hat{w}(\xi,\lambda)\delta_{\xi},
\end{array} 
\right. 
\ee
The problem reduces to studying $\hat{u}_2$ and $\hat{w}_2$ solutions of 
\be\label{q1} \left\{\begin{array}{l}
\lb^2 \uu -\p{xx} \uu +\w =0 \\ 
\lb\w - i \p{xx} \w - \lb \uu =-i \delta _{\xi}
\end{array} \right.
\ee
with $$\hat{u_2}(0)=\hat{u_2}(1)=0, \hat{w_2}(0)=\hat{w_2}(1)=0, [\partial_x\hat{ w_2}]_{\xi}=1, [\hat{ w_2}]_{\xi}=0,$$
and proving the existence of $C_{\beta}>0$
$$|w_2(\xi ,\lambda|)\le C_{\beta}, \forall \lambda=\beta +iy, y\in\R.$$

First, we set $$\w=\ww+\www,$$ where 
\be \label{qw3}\lb\ww - i \p{xx} \ww  =-i \delta _{\xi},\ee
with\be \label{qw3I}\ww(0)=\ww(1)=0, [\partial_x\ww]_{\xi}=1, [\ww]_{\xi}=0.\ee
and 
\be \label{q4}
\lb\www - i \p{xx} \www =  \lb \uu.
\ee
with \be \label{q4I}\www(0)=\www(1)=0.\ee
Let $\beta>0$ be fixed. It is required then to prove that 
$$|\ww(\xi ,\lambda)|\le C_{1\beta} ,\, |\www(\xi ,\lambda)|\le C_{2\beta}, \forall \lambda=\beta +iy, y\in\R.$$
We start by writing the expression of $\ww$,
$$\ww (x,\lb )= -\dsum_{k=1}^{+\infty} \frac{ \pkxi}{k^2 \pi^2-i \lb}\pk= -2\dsum_{k=1}^{+\infty}\frac{ \pkxii}{k^2 \pi^2-i \lb}.$$
For simplicity we consider $\lb = 1 \pm i \pi^2 y^2.$ 
$$|\ww (\xi,\lb )|\lesssim \dsum_{k=1}^{+\infty}\frac{ 1}{|(k^2\pm y^2)\pi^2 - i|}. $$
We first give an estimate for   $\lb = 1 + i \pi^2 y^2,$ 
$$|\ww (\xi,\lb )|\lesssim \dsum_{k=1}^{+\infty}\frac{ 1}{(k^2+y^2)\pi^2} \leq\dfrac{1}{6}.$$
For   $\lb = 1 - i \pi^2 y^2,$  we have
$$|\ww (\xi,\lb )|\leq \dfrac{2}{\pi^2} \left(  \dsum_{1\leq k\leq E(y)-1}
\frac{ 1}{y^2-k^2}+2\pi^2+\dsum_{E(y)+2\leq k}
\frac{ 1}{k^2-y^2} \right) .$$
But 
$$\dsum_{1\leq k\leq E(y)-1}
\frac{ 1}{y^2-k^2}\leq \sum_{1\leq k\leq E(y)-1}
\frac{E(y)-k}{E(y)^2-k^2}=\sum_{1\leq k\leq E(y)-1}
\frac{1}{E(y)+k}\leq 1.$$
and 
$$\dsum_{E(y)+2\leq k}
\frac{ 1}{k^2-y^2}=\dsum_{ k=2}^\infty \frac{ 1}{(k+E(y))^2-y^2}\leq \dsum_{ k=2}^\infty \frac{ 1}{(k-1)^2}=\dfrac{\pi^2}{6}. $$

Therefore  $|\ww (\xi,\lb )|$ is bounded on the line $\Re(\lb)=1.$\\

It remains to find the  estimate satisfied by  $\www(\xi,\lb)$. Indeed, since $(\pk)_{k\in\N^*}$ form a Hilbert basis of $L^2(0,1),$ then we may write $\uu, \w, \www$ as follows

$$\uu (x,\lb )=\dsum_{k=1}^{+\infty}   u_2^{(k)} \pk, \w=\dsum_{k=1}^{+\infty}   w_2^{(k)} \pk , \www=\dsum_{k=1}^{+\infty}   w_4^{(k)} \pk.$$
By the first equation of (\ref{q1}), we get
$$\forall k\geq 1,  u_2^{(k)}=  \dfrac{w_2^{(k)}}{k^2 \pi^2+ \lb^2}.$$
Due to (\ref{q4})
$$\forall k\geq 1,     w_4^{(k)}=\dfrac{\lb u_2^{(k)}}{ ik^2 \pi^2+ \lb}.$$
We deduce that 
$$w_4^{(k)}=-\dfrac{\lb w_2^{(k)}}{(k^2 \pi^2+ \lb^2)(ik^2 \pi^2+ \lb)} .$$
For $\lb=1+iy$ we have 
$$|k^2 \pi^2+ \lb^2|=\sqrt{4 y^2+\left(1+k^2 \pi ^2-y^2\right)^2}\geq 2|y|,$$
and
$$|ik^2 \pi^2+ \lb|=|1+i k^2 \pi ^2+i y|\geq |y|.$$ 
Hence for $|y|$ large enough we have
$$ |w_4^{(k)} |\leq \dfrac{|w_2^{(k)}|}{|y|}.$$
Using $\w=\ww+\ww$ we get for $|y|$ large enough 
$$ |w_4^{(k)} |\leq \dfrac{|w_3^{(k)}|}{|y|}.$$ 
We finally conclude that for $|y|$ large enough  $|\www(\xi,\lb)|$ is bounded on  the line $\Re(\lb)=1.$
It follows that $|\w(\xi,\lb)|$ is bounded as well.\\
\end{proof}
In what follows we prove that the observability assumption (O) holds on subspaces of $\caD(\caA_d)$ on which we deduce the polynomial stability of the energy.
Let us first remark that $0$ is not an eigenvalue of $\caA_d$. Let  $\lb=i \mu$ an eigenvalue of $\caA_d$  and $U=(u,v,w)$ a corresponding eigenvector. We then  have,

\be\label{obs1} \left\{\begin{array}{l}
-\mu^2 u -\p{xx} u +w =0 \\ 
\mu w -  \p{xx} w - \mu u = i w(\xi) \delta_{\xi} . 
\end{array} \right.
\ee
with $$u(0)=u(1)=w(0)=w(1)=0.$$

Multiplying the second equation by  $\overline{w}$ then integrating by parts on $(0,1)$, we find that $w(\xi)=0.$ We hence deduce that $w=0$. Moreover, multiplying the first equation by $\overline{u}$, integrating by parts and considering the imaginary part we deduce that $u=0$.

In order to verify the observability assumption (O) we study in what follows the spectrum of $\caA_c.$ Recall that the eigenvalues of $\caA_c$ are of the form $\lb=i\mu, \mu\in \R$.
\bprop\label{unbdegn} Let $\sigma(\caA_c)$ be the set of eigenvalues of $\caA_c$. 

(i) Then every element of $\sigma(\caA_c)$ is simple and $\sigma(\caA_c)$ is a disjoint union of three sets: 
$$\sigma(\caA_c)=\sigma_0\cup \sigma_1\cup \sigma_2$$ where  
$\sigma_0$ is a finite set, and there exists $k_0 \in\N^*$ such that 
$\sigma_1=\lbrace i \mu_{k,1}\rbrace_{k\in \Z,|k|\geq k_0},$ and
 $\sigma_2=\lbrace i \mu_{k,2}\rbrace_{k\in \N, k \geq k_0}.$   

(ii) For $i\mu_{k,i} \in \sigma_i, i=1,2,$ an associated eigenvector 
$\phi_{\mu_{k,i}}=\dfrac{1}{|k|^{\alpha_i}}(u_{\mu_{k,i}}, v_{\mu_{k,i}}, w_{\mu_{k,i})},$ with 
$\alpha_1= 1$ and $\alpha_2= 4$ is given by
$$u_{\mu_{k,i}}(x)=\sin(k\pi x), v_{\mu_{k,i}}(x)=i \mu_{k,i}u_{\mu_{k,i}}(x),
w_{\mu_{k,i}}(x)=(\mu^2_{k,i} -k^2\pi^2)\sin(k\pi x).$$

(iii) The following estimates  hold  

\be\label{estm1} 
\mu_{k,1}=k \pi+\dfrac{1}{2\pi^2 k^2}+o(\dfrac{1}{k^2}) ,\; |k|\rightarrow \infty,  
 \ee
\be\label{estf1} 
\Vert \phi_{\mu_{k,1}}\Vert _{\caH}\sim 1,
\ee
\be\label{wkxi1} 
\mu_{k,1}^2-k^2\pi^2=\dfrac{1}{k\pi}+o(\dfrac{1}{k}),
\ee
\be\label{estm2} 
\mu_{k,2}=-k^2 \pi^2 +O(\dfrac{1}{k^2}) ,\; k\rightarrow +\infty ,
 \ee
 \be\label{estf2}  
\Vert \phi_{\mu_{k,2}}\Vert _{\caH}=O(1), 
 \ee
 \be\label{wkxi2} 
\mu_{k,2}^2-k^2\pi^2=k^4\pi^4+O(k^2).
\ee
\eprop 
\begin{proof}
Let  $\lb=i\mu$ be an eigenvalue of $\caA_c$ and $U=(u,\lambda u,w)$ be a corresponding eigenvector of $\caA_c$. Then $u$ and $w$ satisfies 
\be\label{obs2} \left\{\begin{array}{l}
-\mu^2 u -\p{xx} u +w =0 \\ 
\mu w -  \p{xx} w - \mu u = 0 . 
\end{array} \right.
\ee
Replacing $w$ in the second equation, we find that 
\be\label{obs3}\left\{\begin{array}{l}
\p{xxxx} u + (\mu ^2- \mu) \p{xx}u   +(\mu-\mu ^3)u =0,\\ 
u(0)=\p{xx}u(0)= u(1)=\p{xx}u(1)=0.
\end{array} \right.
\ee
It is easy to check that $\mu=0,\mu=1$ and $\mu=-1$ are not eigenvalues of $\caA_c$.

Let $X_1=\dfrac{1}{2} (\mu -\mu ^2-\sqrt{\Delta })$ and 
$X_2=\dfrac{1}{2} (\mu -\mu ^2+\sqrt{\Delta })$ be the roots of 
$$p(X)=X^2  + (\mu ^2- \mu) X  +\mu-\mu ^3 =0,$$ 
where $\Delta=\mu  (\mu-1 )  (\mu^2+3 \mu +4)$ is the discriminant of $p$.

Set $t_i=\sqrt{X_i},i=1,2$ then the general form of $u$ satisfying the first equation
of (\ref{obs3}) and the left boundary condition is  
$$u(x)=c_1 \sinh(t_1x)+c_2 \sinh(t_2x ).$$
Considering the right boundary conditions we see that $u$ is non trivial if and only if $t_1$ and $t_2$ satisfy 
$$\sinh(t_1) \sinh(t_2)(t_1^2-t_2^2)=0.$$
But $t_1^2-t_2^2\neq 0$, since $\mu\ne 0$ and $\mu \ne 1$. Hence $t_1$ and $t_2$ satisfy the following characteristic equation  

$$\sinh(t_1) \sinh(t_2)=0,$$ 
which gives that $t_1=ik \pi$ or  $t_2=i k \pi, k\in\Z^*$ i.e 
$X_1=-k^2\pi^2$ or $X_2=-k^2\pi^2.$ 

Now, we remark that all the eigenvalues of $\caA_c$ are  simple.
Suppose otherwise that there exists a  double eigenvalue, then there exist $k_i, \in \N^*, i=1,2 $ s.t $X_i=-k_i \pi^2,i=1,2.$ Thus we have 
$$\dfrac{X_1X_2}{X_1+X_2}=-\dfrac{k_1^2 k_2^2 \pi ^2}{k_1^2+k_2^2}=\mu+1.$$
Now, replacing $\mu$ in $X_1+X_2=\mu -\mu ^2$, we find that 
$$2 k_1^4+4 k_1^2 k_2^2+2 k_2^4-k_1^6 \pi ^2-k_2^6 \pi ^2+k_1^4 k_2^4 \pi ^4=0,$$  
which is impossible since $\pi^2$ is a transcendental number.

Therefore,
$$u(x)=\sin(k\pi x), \; w(x)=(\mu^2+X_i)\sin(k\pi x) ,\; i=1 \mbox{ or }  2.$$
Moreover, 
the eigenvalues of $\caA_c$ are formed of two disjoint families of eigenvalues. The first class of eigenvalues is obtained from $X_1=-k^2\pi^2,$ the second class is obtained from $X_2=-k^2\pi^2.$

Now, we firstly study the asymptotic behaviour of the first class: since $X_1=-\mu^2+\dfrac{1}{\mu}+o(\dfrac{1}{\mu})=-k^2\pi^2$ then $\mu=k\pi +\dfrac{1}{2\pi^2 k^2}+o(\dfrac{1}{k^2}), |k|\rightarrow \infty.$ 
If we denote by $\lbrace i\mu_{k,1}\rbrace_{k\in\\Z^*}$ this first class of eigenvalues then the previous estimate is (\ref{estm1}). 
Using the previous estimate we directly get (\ref{estf1}) and (\ref{wkxi1}). 

Secondly, since $X_2=\mu+O(\dfrac{1}{\mu})=-k^2\pi^2$ we deduce that the large eigenvalues of the second class are negative, and denoting them by $i\mu_{k,2}$ we easily see that (\ref{estm2}) holds true. Moreover, since   $\mu_{k,2}^2-k^2\pi^2=O(k^4)$ then  (\ref{estf2}) holds.
\end{proof}
In order to use generalized Inghams inequalities we need to estimate 
$\displaystyle\inf_{\mu_{k,1}\in \sigma_1,\mu_{k',2}\in \sigma_2} |\mu_{k,1}-\mu_{k',2}|$.  Unfortunately it seems to be a difficult task and it remains a open question. Hence, to get an observability result we will take the initial condition $U_0$ in some subspaces of $\caH.$ So we introduce 
 
\begin{center}$H_1=\hbox{span}({\displaystyle\phi_{\mu})_{\mu\in \sigma_0}}\cup \hbox{span}{\displaystyle
(\phi_{\mu})_{\mu\in \sigma_1}}$ and $H_2=\hbox{span}({\displaystyle\phi_{\mu})_{\mu\in \sigma_0}}\cup \hbox{span}{\displaystyle
(\phi_{\mu})_{\mu\in \sigma_2}}.$

\end{center}
Before given an observability result we introduce the set $\cal{S}$ of all numbers $\rho \in (0,\pi)$  such that $\dfrac{\rho}{\pi} \notin \Q$ 
and if $[0,a_1 ,...,a_n ,...]$  is the expansion of $\dfrac{\rho}{\pi}$
as a continued fraction, then $(a_n)$ is bounded. Recall that if $\pi \xi \in  \cal{S}$ then 
\be \label{estsin}
|\sin(k\pi \xi)| \gtrsim \dfrac{1}{|k|},\; k\in \Z^*,
\ee (see for instance  \cite{ammari:00}). 

 \bprop\label{unbdobs}
 \begin{enumerate}
 \item
For all $\xi \in (0,1)$ there are not $T, C > 0$ such that for all $U_ 0 \in {\cal H}$ we have
 \be 
 \label{obd1bis}  \int_{0}^{T}|w(\xi,t)|^2 dt\geq C_T 
 \Vert U_0\|^2_{{\cal H}}.\ee 
 \item
Suppose that $\xi\in \cal{S}.$\\  Let $U_0\in H_1$ and $U=(u,v,w)$ be the corresponding solution of the conservative problem 
 \be\label{consunbd}
 U_t=\caA_c U, U(0)=U_0.
 \ee 
 Then there exists T>0 and a constant $c_T>0$ such that 
 \be \label{obd1}  \int_{0}^{T}|w(\xi,t)|^2 dt\geq C_T 
 \Vert U_0\|^2_{\caD(\caA_c^{-3})},\ee 
where $\caD(\caA_c^{-3}) = (\caD(\caA_c^{3}))^\prime,$ obtained by means of the inner product in $X$.

For $U_0\in H_2$ we have  

 \be \label{obd2}  \int_{0}^{T}|w(\xi,t)|^2 dt\geq C_T 
 \Vert U_0\|^2_{\caD(\caA_c^{-\frac{1}{2}})}.\ee 
 
\end{enumerate}
\eprop
\begin{proof}
\begin{enumerate}
\item
Since 
$$
 \lim_{n \rightarrow + \infty} \left\|\left(i\mu_{n,1} - \caA_c \right)\phi_{n,1}\right\|_\caH^2 +  \left\|\left(\begin{array}{lllc} 0 & 0 & D^* \end{array} \right) \phi_{n,1} \right\|^2_U = 0.
$$
Which implies according to \cite[Theorem 5.1]{miller:05} that we don't have the exact observability, i.e., the inequality (\ref{obd1bis}). 
\item
Let $U_0\in H_1$. We may write 
$$U_0=\sum_{\mu\in \sigma_0}u_0^{\mu}\phi_{\mu}+\sum_{|k|\ge k_0}u_{0}^{(k)}\phi_{\mu_{k,1}}.$$ Moreover,
$$w(\xi,t)=\dfrac{1}{|k|}\left(\sum_{\mu\in \sigma_0}u_0^{\mu}e^{i\mu t}w_{\mu}(\xi)+\sum_{|k|\ge k_0}u_{0}^{(k)}e^{i  \mu_{k,1} t }w_{\mu_{k,1}}(\xi) \right).$$
Note that $\gamma_1=\displaystyle\inf_{\mu,\mu'\in \sigma,\mu\neq \mu'}|\mu-\mu'|>0,$ then using Ingham's inequality there exists $T>{2\pi}{\gamma_1}>0$ and  a constant $c_T>$ depending on $T$ such that

$$\int_{0}^{T}|w_{1}(\xi,t)|^2dt\geq c_T\dfrac{1}{|k|}\left(\sum_{\mu\in \sigma_0}|u_{0}^{\mu}w_{\mu}(\xi)|^2+\sum_{|k|\ge k_0 }|u_{0}^{(k)}w_{\mu_{k,1}}(\xi)|^2\right).$$ 
Now using (ii), and estimates (\ref{estm1}),(\ref{estf1}), (\ref{wkxi1}) of Proposition \ref{unbdegn} we get (\ref{obd1}). 
For $U_0 \in H_2,$ we use analogous argument. 
\end{enumerate}
\end{proof} 
\begin{theorem}
\begin{enumerate}
\item
For any $\xi \in (0,1)$, the system described by (\ref{unbd}) is not exponentially
stable in ${\cal H}$.
\item
Let $U_0\in H_1\cap \caD(\caA_d),$ and let $U$ be the solution  of the corresponding dissipative problem
$$U_t=\caA_d U, \,\, U(0)=U_0.$$
 Then $U$ satisfies,
\be \label{dstab}
\|U(t)\|^2 \lesssim \dfrac{1}{(1+t)^{\frac{1}{3}}}\| U_0\|_{\caD(\caA_d)}^2.
\ee
\item
Let $U_0\in H_2\cap \caD(\caA_d),$ and let $U$ be the solution  of the corresponding dissipative problem
$$U_t=\caA_d U, \,\, U(0)=U_0.$$
 Then $U$ satisfies,
\be \label{dstab2}
\|U(t)\|^2 \lesssim \dfrac{1}{(1+t)^{2}}\| U_0\|_{\caD(\caA_d)}^2.
\ee
\end{enumerate}
\end{theorem}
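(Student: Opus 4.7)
The plan is to combine the spectrally precise observability of Proposition \ref{unbdobs} with the abstract machinery of Theorems \ref{THexp} and \ref{Thpolynomial}, both of which apply here because the transfer-function assumption $\mathbf{(H)}$ was checked in Proposition \ref{Transunbd}.

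For item 1 the argument will be immediate: Proposition \ref{unbdobs}(1) shows that the observability inequality (\ref{obd1bis}) fails on all of $\caH$, so the ``if and only if'' clause in Theorem \ref{THexp} directly rules out exponential stability in the energy space.

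For items 2 and 3 my plan is to invoke Theorem \ref{Thpolynomial}(2) with tailored interpolation scales. Since $\caA_c$ is skew-adjoint with compact resolvent and $0 \notin \sigma(\caA_c)$, the fractional-power domains obey the standard identity $[\caD(\caA_c^a); \caD(\caA_c^b)]_\theta = \caD(\caA_c^{(1-\theta)a+\theta b})$. For item 2, I would set $\caH_1 = \caD(\caA_c)$ and $\caH_2 = \caD(\caA_c^{-3})$; the balance $(1-\theta)(1) + \theta(-3) = 0$ yields $\theta = 1/4$, hence $\theta/(1-\theta) = 1/3$, matching (\ref{dstab}). For item 3, I would set $\caH_1 = \caD(\caA_c)$ and $\caH_2 = \caD(\caA_c^{-1/2})$, so $\theta = 2/3$ and the rate is $2$, as in (\ref{dstab2}). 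The observability hypothesis (\ref{Obsv2}) of Theorem \ref{Thpolynomial} is provided by (\ref{obd1}) and (\ref{obd2}) of Proposition \ref{unbdobs}.

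The main obstacle is that Theorem \ref{Thpolynomial} requires observability on all of $\caD(\caA_d)$, whereas here it is available only on the spectral subspaces $H_1, H_2$ of $\caA_c$. These subspaces are invariant under the conservative flow $e^{t\caA_c}$, so the splitting $z = z_1 + z_2$ from the proof of Theorem \ref{THexp} together with $\mathbf{(H)}$ still delivers the one-step estimate $\|U_0\|_{\caH_2}^2 \lesssim E(0) - E(T)$ for $U_0 \in H_i \cap \caD(\caA_d)$. However the dissipative perturbation $\caA_r$ acts through a Dirac mass $\delta_\xi$, which excites every Fourier mode, so $e^{t\caA_d}$ does not preserve $H_i$ and the iteration on $[kT, (k+1)T]$ in the proof of Theorem \ref{Thpolynomial} does not directly apply. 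To close the gap I plan either to run the iteration at the level of the projection $P_i z(t)$ onto $H_i$, treating the complementary modes (the finite $\sigma_0$-part and the other spectral family) separately, or alternatively to convert the one-step estimate into a Borichev--Tomilov resolvent bound and read off the polynomial rate without any iteration.
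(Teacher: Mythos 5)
Your plan is, step for step, the paper's own proof: item 1 is obtained exactly as you say from Proposition \ref{unbdobs}(1) combined with the equivalence in Theorem \ref{THexp}, and for items 2 and 3 the paper makes precisely your choices, namely $\caH_1=\caD(\caA_c)$, $\caH_2=\caD(\caA_c^{-3})$, $\theta=\frac{1}{4}$ and $\caH_1=\caD(\caA_c)$, $\caH_2=\caD(\caA_c^{-1/2})$, $\theta=\frac{2}{3}$, with assumption (H) supplied by Proposition \ref{Transunbd} and the observability inputs by (\ref{obd1})--(\ref{obd2}). The difference is that the paper stops there: its proof is a two-line citation of Theorem \ref{Thpolynomial}, with no comment on the fact that the observability estimates hold only on the $\caA_c$-spectral subspaces $H_1,H_2$, whereas the hypothesis of Theorem \ref{Thpolynomial} is stated for all of $\caD(\caA_d)$. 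So the obstacle you single out is genuine, and it affects the paper's argument as much as yours: the one-step estimate $\|U_0\|_{\caH_2}^2\lesssim E(0)-E(T)$ does go through for $U_0\in H_i\cap\caD(\caA_d)$, because the conservative comparison solution remains in $H_i$, but the iteration over $[kT,(k+1)T]$ requires $z(kT)$ to lie again in a set where observability is available, and $e^{t\caA_d}$ does not preserve $H_i$ since $\caA_r$ acts through $\delta_\xi$. Neither of your proposed repairs is carried out, and the second one is itself delicate: Borichev--Tomilov type resolvent criteria characterize decay of the full semigroup (or of $e^{t\caA_d}\caA_d^{-1}$ on the whole space), not decay restricted to a subspace that the damped flow does not leave invariant, so some additional argument would still be needed there. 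In short, your proposal reproduces the paper's route and, unlike the paper, explicitly flags the step that remains to be justified; as written, neither your text nor the paper closes that step.
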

\begin{proof}
\begin{enumerate}
\item
This result is a direct consequence of the first assertion of Proposition \ref{unbdobs} and Theorem \ref{THexp}.
\item
Due to Proposition \ref{Transunbd} and Proposition \ref{unbdobs} we deduce (\ref{dstab}) from Theroem \ref{Thpolynomial} setting    
${\caH}_1=\caD(\caA_c)$ and ${\caH}_2=\caD(\caA_c^{-3})$ and $\theta=\frac 14$. 
\item
As in 2.  we deduce (\ref{dstab2})  setting    
${\caH}_1=\caD(\caA_c)$ and ${\caH}_2=\caD(\caA_c^{-\frac{1}{2}})$ and $\theta=\frac 23$. 
\end{enumerate}
\end{proof}

\bibliographystyle{abbrv}
\bibliography{zainabdkz}

\begin{thebibliography}{1}

\bibitem{abbas:13}
Z.~Abbas and S.~Nicaise.
\newblock Polynomial decay rate for a wave equation with general acoustic
  boundary feedback laws.
\newblock {\em S$\vec{\rm e}$MA J.}, 61:19--47, 2013.

\bibitem{amtuten}
K.~Ammari, G.~Tenenbaum, and M.~Tucsnak.
\newblock Spectral conditions for the stability of a class of second order
  systems.
\newblock {\em preprint}.

\bibitem{ammari:00}
K.~Ammari and M.~Tucsnak.
\newblock Stabilization of {B}ernoulli-{E}uler beams by means of a pointwise
  feedback force.
\newblock {\em SIAM J. Control Optim.}, 39(4):1160--1181, 2000.

\bibitem{ammari:01}
K.~Ammari and M.~Tucsnak.
\newblock Stabilization of second order evolution equations by a class of
  unbounded feedbacks.
\newblock {\em ESAIM Control Optim. Calc. Var.}, 6:361--386, 2001.

\bibitem{Mercier:11}
D.~Mercier and S.~Nicaise.
\newblock Polynomial decay rate for a wave equation with weak dynamic boundary
  feedback laws.
\newblock {\em J. Abstr. Differ. Equ. Appl.}, 2(1):29--53, 2011.

\bibitem{miller:05}
L.~Miller.
\newblock Controllability cost of conservative systems: resolvent condition and
  transmutation.
\newblock {\em J. Funct. Anal.}, 218(2):425--444, 2005.

\bibitem{Pazy}
A.~Pazy.
\newblock {\em Semigroups of linear operators and applications to partial
  differential equations}, volume~44 of {\em Applied Math. Sciences}.
\newblock Springer-Verlag, New York, 1983.

\bibitem{Triebel}
H.~Triebel.
\newblock {\em Interpolation theory, function spaces, differential operators},
  volume~18 of {\em North-Holland Mathematical Library}.
\newblock North-Holland Publishing Co., Amsterdam, 1978.

\bibitem{wehbe:03}
A.~Wehbe.
\newblock Rational energy decay rate for a wave equation with dynamical
  control.
\newblock {\em Appl. Math. Lett.}, 16(3):357--364, 2003.

\end{thebibliography}


\begin{thebibliography}{99}

\bibitem{ammari:01b} K.~Ammari, A.~Henrot and M.~Tucsnak, Asymptotic behaviour of the solutions and optimal location of the actuator for the
pointwise stabilization of a string, {\em Asymptotic Analysis,} {\bf 28} (2001), 215-240.  


\bibitem{ammari:01} K.~Ammari and M.~Tucsnak, Stabilization of second order evolution equations by a class of unbounded feedbacks, {\em ESAIM Control Optim. Calc. Var.,} {\em ESAIM Control Optim. Calc. Var,} {\bf 6} (2001), 361-386. 

\bibitem{miller} L.~Miller, Controllability cost of conservative systems:
resolvent condition and transmutation, {\em JFA.,} {\bf 218} (2005), 425-444.

\bibitem{Pazy} A.~Pazy, {\em Semigroups of linear operators and applications
to partial differential equations}, Springer, New York, 1983.

\bibitem{Triebel} H.~Triebel, {\em Interpolation theory, function spaces, 
differential operators}, North Holland, Amsterdam, 1978. 

\bibitem{tucsnak:03} M.~Tucsnak and G.~Weiss, Observation and control for operator semigroups, {\em Birkh\"auser Advanced Texts: Basler Lehrb\"ucher}, Birkh\"auser Verlag, Basel, 2009.

\end{thebibliography}

\edc